\newcommand{\dsum}{\displaystyle\sum}
\newcommand{\dmax}{\displaystyle\max}
\newtheorem{theo}{Theorem}
\newtheorem{defn}[theo]{Definition}
\newtheorem{ex}[theo]{Example}
\newtheorem{prop}[theo]{Proposition}
\newtheorem{remark}[theo]{Remark}
\def\R{\mathbb{R}}
\def\RP{\mathrm{RP}}
\def\CP{\mathrm{CP}}
\def\BP{\mathrm{BP}}
\def\OWA{\mathrm{OWA}}
\def\Poi{\mathcal{P}oisson}
\def\Geo{\mathrm{Geometric}}
\def\Beta{\mathrm{Beta}}
\def\Ga{\mathrm{Gamma}}
\def\bmp{\mathrm{BPM}}
\newcommand{\dmin}{\displaystyle\min}
\pgfplotsset{compat=newest}
\title{On the aggregation of experts' information in Bonus--Malus systems}
\author{V\'ictor Blanco}
\address{Dept. Quantitative Methods for Economics \& Business\\
Universidad de Granada.}
\email{vblanco@ugr.es}
\author{Jos\'e M. P\'erez--S\'anchez}
\address{Dept. Quantitative Methods for Economics \& Business\\
Universidad de Granada.}
\email{josemag@ugr.es}
\keywords{Non--life Insurance; Premium Computation Principles; Ordered Weighted Averaging; Mathematical Programming; Bayesian Statistics.}
\subjclass[2010]{97M30; 91B30; 90C90; 62C12.}
\begin{document}

\maketitle

\begin{abstract}
We present in this paper a new premium computation principle based on the use of \textit{prior} information from multiple sources for computing the premium charged to a policyholder. Under this framework, based on the use of Ordered Weighted Averaging (OWA) operators, we propose alternative collective and Bayes premiums and describe some approaches to compute them. Several examples illustrates the new framework for premium computation.
\end{abstract}

\section{Introduction}

A Bonus--Malus System (BMS) is a merit rating method that is widely used by actuaries in non--life insurance, specially for  automobile insurances (see \cite{gomez02}, \cite{gomez06}, \cite{lemaire79}, \cite{lemaire88}, \cite{lemaire98}, \cite{shengwang-whitmore99} for a brief overview of this topic). This rating method is based on the principle that the premium charged to a client or a portfolio of clients depends on the claims filed on the policy in the past. The BMS methodology is characterized by important penalties in case of claims and moderate premium discounts awarded to
claim--free policyholders. Due to the nature of the problem it is common to apply Bayesian techniques to compute those premiums. In the Bayesian paradigm, a \textit{prior} distribution is used to quantify the knowledge about the unknown (risk) parameter,  while in case data are available, the \textit{prior} knowledge is updated using the likelihood function to give the \textit{posterior} distribution. One of the most usual techniques to compute BM premiums consists of minimizing certain expected losses that measure the deviation of the estimated risk parameter with respect to its actual value. This is performed by plugging the risk into the distribution function of the number of claims or severity (see \cite{lemaire79,mert2005,shengwang-whitmore99} among others). The parameters of the \textit{prior} distributions are usually provided by the expert's knowledge, while the \textit{posterior} distribution also considers a sample of claims from the policyholders during a previous time horizon.

Bayesian methodology assumes that prior information is provided by an expert, who should manifest some (sometimes partial) information about the behavior of the portfolio of policyholders in terms of claims. If this ``experience'' is not fully available, the \textit{prior knowledge} can be estimated from a given sample. This information is not always unified and different experts may manifest different behaviors of the same risk. Furthermore, the information provided by the experts is not always precisely reported since they are usually asked to provide some approximate descriptive measures of the risk as the mean or the mode. This imprecision might imply a considerable monetary gain or loss to the insurance company. In this paper, we deal with the problem of computing unified Bonus--Malus premiums when different experts give their opinion about the risk of a portfolio of policyholders.

The problem of aggregating different experts' information is not new and some authors have previously proposed techniques to perform such an aggregation by assuming that each expert provides a probabilistic information over the risks under study. Mixtures of distributions allows to provide a single distribution aggregating the whole information (the reader is referred to \cite{clemen-winkler}, \cite{genest-zidek} and \cite{ohagan} for further information). These techniques are usually called ``opinion pools''. The most common distributions under this framework are the linear, in which the probability distributions of the experts are averaged, and logarithmic (see \cite{bacharach}, \cite{rufo-perez} and \cite{stone}). Also, in \cite{arbenz-canestraro} a Bayesian method is presented that combines different sources of information to estimate copula parameters in a scarce observations context.

In this paper, we propose a new approach based on aggregating the \textit{prior} knowledge of all the experts on an unique loss function which is minimized to compute the premium by considering only the distribution of the number of claims. Such an aggregation criterion  generalizes the linear opinion pools methodology and is based on the family of Ordered Weighted Averaging (OWA) operators. These operators were introduced in \cite{yager88}, as a flexible approach to aggregate functions in multicriteria decision making. They allow to model as particular cases the maximum, the mean, the minimum, the mean or the range of a finite set of values. In OWA aggregation, a vector of weights is given. The result is the linear combination of those weights with the ordered values, the first weight to the largest value, the second weight to the second largest value, etc. OWA operators have been applied in many different fields in order to generalize classical operators. They have been used to allow flexibility in many models and that combined with mathematical programming tools have been applied to generalize Linear Regression models \cite{BPS15}, the Gini index \cite{cardin2014}, Location problems \cite{nickel-puerto}, among others. 

The main highlights of our approach are: (1) The premium assigned to a client depends not only of a single \textit{prior} distribution but of several ones (one for each of the experts that give their opinion on the behavior of the portfolio) and does not need previous probabilistic information to aggregate the expert's information; (2) a flexible family of choices for aggregating the \textit{prior} information is provided to the actuary; and (3) an unified mathematical optimization framework is presented to compute the premiums to be loaded to the policyholders. Although not widely extended as it should, mathematical programming tools have been already applied in Risk Theory (see e.g.,  \cite{carretero2000} or \cite{gupta2007}).

The paper is organized as follows. In Section \ref{sec:1} we recall some of the definitions and results in net premium computation principles and OWA operators. Section \ref{sec:2} is devoted to introduce and state the main results of the new framework for computing risk premiums based on the use of several sources of \textit{prior} information aggregated with OWA operators. Finally, Section \ref{sec:3} reports the results of  some computational experiments performed in order to compare our approach with the classical methodology.

\section{Preliminaries}
\label{sec:1}

In this section, for the sake of completeness, we introduce the main definitions that will be used through this paper. We recall some of the main premium computation principles and state the notation for the rest of the sections.

Consider a policyholder, drawn randomly from the insurance portfolio, who is observed to have a sequence of independent and equidistributed claims, $X_1$, $X_2$, $\ldots$ ,$X_t$, representing the performances of a single risk in the last $t$ periods.  A premium computation principle is a function $\mathcal{H}$, that maps to any risk a nonnegative real number, the premium that should be charged for the next period. Some properties are desired to be fulfilled by any premium calculation principle (see \cite{gerber79}, \cite{heilmann89}, \cite{hurlimann94} and \cite{young04} for a detailed description).

A first attempt to compute the premium of a policyholder when no previous experience is known is the \emph{risk premium}. For a given random variable $X$ (number of claims) with density $f_\theta(x)$ for a given risk parameter $\theta \in \R$ and a fixed loss function $\mathcal{L} : \R^2 \rightarrow \R$, where $\mathcal{L}(P,x)$ is the loss incurred by a decision maker taking the action $P$ (the premium paid by the policyholder) and facing the outcome $x$ (the actual number of claims). The unknown risk premium is obtained by minimizing the expected loss under the risk distribution:
\begin{equation}
\RP:= \dmin_{P \in \mathbb{P}} L^R(P) = \int_x \mathcal{L}(P, x) f_\theta(x)  dx = E\left[\mathcal{L}(P,x)\right],\tag{${\rm RP}$}\label{eq:rp}
\end{equation}
 where $\mathbb{P} \subseteq \R$. For the sake of simplicity, we assume that $\mathbb{P} = \R$, although the results in this paper extend to the case when $\mathbb{P}$ is a given interval. Also, we consider quadratic losses in the form:
 $$
 \mathcal{L}(P,x) = (P-x)^2.
 $$

The premiums computed under this loss function are called \emph{net premiums} and the risk loss function becomes:
 $$
 L^R(P) = P^2 - 2P E[X] + E[X^2],
 $$
 being its minimum attained at $P^*_R=E[X]$ with $L^R(P^*_R)=Var(X)$.

In the Bayesian paradigm, the parameters of the risk distribution are assumed to be unknown, but distributed according to a \textit{prior} distribution. In this setting, the \emph{collective premium} is obtained by minimizing the expected loss (under the \textit{prior} distribution) with respect to the actual risk premium:
\begin{equation}
\CP := \dmin_{P \in \mathbb{P}} L^C(P) =  \int_\theta \mathcal{L}(P, P(\theta)) \pi(\theta)  d\theta, \tag{${\rm CP}$}\label{eq:cp}
\end{equation}
where $P(\theta)$ is the risk premium when the parameter of the risk distribution is $\theta$ and  $\pi$ is the density function of the \textit{prior} distribution.

With the net premium principle (quadratic loss), we get that the collective loss function (to be minimized to obtain the premium) is:
\begin{equation}\label{cpe}
L^C(P) = P^2 - 2P E_\pi[E_\theta[X]] + E_\pi[E_\theta[X]^2],
\end{equation}
where $E_{\theta}[X]$ denotes the expected value of $X$ with the (random) parameter $\theta$. Note that for a given \textit{prior} distribution $\pi$, its minimum is attained at $P_C^* = E_\pi[E_\theta[X]]$ with $L^C(P_C^*) = Var_\pi(E_\theta[X])$.

Finally, combining the \textit{prior} information and the sample information $\mathbf{x}$ (\textit{posterior} knowledge), a third premium, the so--called \emph{Bayes premium} is computed as:
\begin{equation}
\BP(\mathbf{x}):= \dmin_{P \in \mathbb{P}}  L^B(P) =\int_\theta \mathcal{L}(P,P(\theta)) \pi(\theta|\mathbf{x})
d\theta, \tag{${\rm BP}$}\label{eq:bp}
\end{equation}
where $\pi(\theta|\mathbf{x})$ is the density function of the \textit{posterior} distribution obtained by applying Bayes' Theorem.

%
Again, for the quadratic loss, the Bayes loss function is:
\begin{equation}\label{bpe}
L^B(P) =  P^2 - 2P E_{\pi(\theta|x)}[E_\theta[X]] + E_{\pi(\theta|x)}[E_\theta[X]^2],
\end{equation}
whose minimum is $P_B^* = E_{\pi(\theta|x)}[E_\theta[X]]$ with $L^B(P_B^*) = Var_{\pi(\theta|x)}(E_\theta[X])$.

We show in what follows, for the sake of a self-contained paper, the shapes of some well-known (collective and Bayes) loss functions in automobile insurance and that will be used in our experiments.

\begin{prop}
\label{prop:poigamma}

Let  $X$  be a risk and $x_1, \ldots, x_t$  a sample data for a given policyholder:
\begin{enumerate}
\item If $X \sim \Poi(\theta)$ and $\theta \sim \Ga(\alpha,\beta)$, then $P^*_R = \theta$ and the collective and Bayes loss functions are, respectively:
\begin{align*}
L^C (P) &= P^2 - 2\frac{\alpha}{\beta}P + \frac{\alpha}{\beta}\left(\frac{\alpha}{\beta}+1\right),\\
L^B (P) &= P^2 - 2\frac{\alpha + t\overline{x}}{\beta+t} P + \frac{\alpha + t\overline{x}}{\beta+t}\left(\frac{\alpha + t\overline{x}}{\beta+t}+1\right).
\end{align*}
Thus, $P_C^* = \dfrac{\alpha}{\beta}$ and $P_B^*=\dfrac{\alpha + t\overline{x}}{\beta+t}$.
\item If $X \sim \Geo(\theta)$ and $\theta \sim \Beta(\alpha,\beta)$, then $P^*_R = \dfrac{1-'\theta}{\theta}$, while the collective and Bayes loss functions are, respectively:
\begin{align*}
\mathcal{L}^C (P) = P^2 - 2 \frac{\beta}{\alpha-1}P+  \frac{(\beta+1)\beta}{(\alpha-1)(\alpha-2)},\\
\mathcal{L}^B (P) = P^2 - 2 \frac{\beta + t\bar x}{\alpha+t-1}P+  \frac{(\beta+t\bar x+1)(\beta+t\bar x)}{(\alpha+t-1)(\alpha+t-2)}.
\end{align*}
Thus, $P_C^*=\dfrac{\beta}{\alpha-1}$ and $P_B^*=\dfrac{\beta + t\bar x}{\alpha+t-1}$.
\end{enumerate}
\end{prop}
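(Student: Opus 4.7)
The plan is to instantiate the general quadratic--loss formulas \eqref{cpe} and \eqref{bpe} for each of the two conjugate pairs. Under the net (quadratic) loss, the risk premium for a given value of $\theta$ is just $P(\theta)=E_\theta[X]$, so everything reduces to computing the first and second moments of $E_\theta[X]$ under the appropriate prior and posterior distributions of $\theta$, and then minimizing the resulting quadratic in $P$.

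For part (1), I would first observe that $X\sim\Poi(\theta)$ gives $E_\theta[X]=\theta$, hence $P_R^{*}=\theta$. Plugging $P(\theta)=\theta$ into \eqref{cpe} leaves only the first and second moments of a $\Ga(\alpha,\beta)$ random variable, which I read off from the standard moment formulas of the Gamma distribution. For the Bayes loss I would invoke the Poisson--Gamma conjugacy: for an i.i.d.\ sample $\mathbf{x}=(x_1,\dots,x_t)$ with sample mean $\overline{x}$, the posterior is $\theta\mid\mathbf{x}\sim\Ga(\alpha+t\overline{x},\beta+t)$. Substituting this updated distribution into \eqref{bpe} with the same moment substitutions produces the stated $L^B$, and reading off the vertex of the quadratic gives $P_B^{*}=(\alpha+t\overline{x})/(\beta+t)$.

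For part (2), $X\sim\Geo(\theta)$ gives $E_\theta[X]=(1-\theta)/\theta$, so $P_R^{*}=(1-\theta)/\theta$. To evaluate \eqref{cpe} I need $E_\pi[(1-\theta)/\theta]$ and $E_\pi[((1-\theta)/\theta)^{2}]$ under a $\Beta(\alpha,\beta)$ prior; writing $(1-\theta)/\theta=\theta^{-1}-1$ and expanding the square reduces this to computing the negative Beta moments $E_\pi[\theta^{-1}]=(\alpha+\beta-1)/(\alpha-1)$ and $E_\pi[\theta^{-2}]=(\alpha+\beta-1)(\alpha+\beta-2)/((\alpha-1)(\alpha-2))$ (valid for $\alpha>2$), which follow from the usual Beta--function identity for $B(\alpha-k,\beta)/B(\alpha,\beta)$. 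The Bayes case then uses the Geometric--Beta conjugacy: updating with a sample of size $t$ and mean $\overline{x}$ yields the posterior $\theta\mid\mathbf{x}\sim\Beta(\alpha+t,\beta+t\overline{x})$, and repeating the moment computation with the updated hyperparameters delivers $L^B$ and the minimizer $P_B^{*}=(\beta+t\overline{x})/(\alpha+t-1)$.

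The only genuinely non--mechanical step is the algebraic simplification in the Geometric--Beta case: the naive expansion of $E_\pi[\theta^{-2}]-2E_\pi[\theta^{-1}]+1$ produces a rational function of $\alpha,\beta$ whose numerator has to be checked to collapse to $\beta(\beta+1)$. Beyond that, the proof is a direct substitution of standard prior (resp.\ posterior) moments into the quadratic expressions \eqref{cpe} and \eqref{bpe}, followed by elementary minimization of a univariate quadratic in $P$.
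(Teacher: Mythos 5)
Your proof is correct and is exactly the standard conjugate-prior computation (posterior updating plus first and second moments of $\theta$, respectively of $(1-\theta)/\theta$, substituted into \eqref{cpe} and \eqref{bpe}); the paper states Proposition \ref{prop:poigamma} without proof, as a recollection of well-known results, so there is no alternative argument to compare against. One remark: in the Poisson--Gamma case your (correct) value $E_\pi\left[\left(E_\theta[X]\right)^2\right]=E_\pi[\theta^2]=\alpha(\alpha+1)/\beta^2$ does not match the constant term $\frac{\alpha}{\beta}\left(\frac{\alpha}{\beta}+1\right)$ printed in the statement (the two coincide only for $\beta=1$); this is a typo in the paper rather than a gap in your argument --- the analogous Geometric--Beta constant, which you verify collapses to $\beta(\beta+1)/\bigl((\alpha-1)(\alpha-2)\bigr)$, is printed correctly --- and it is harmless for the conclusions since the constant does not affect the minimizers $P_C^*$ and $P_B^*$.
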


With the above definitions, Lemaire \cite{lemaire79} defined Bonus--Malus premium as:
\begin{equation}
\bmp  = 100 \; \dfrac{P^*_B}{P^*_C}, \tag{${\rm BMP}$}\label{eq:bmp}
\end{equation}
which represents the percentage overcharged of the individual premium with respect to the collective premium.

The collective and Bayes premiums are based on \textit{prior} information provided by experts or previous experience. However, in practice, such an information is not completely objective and different opinions may be needed to conform a more precise premium. Here, we propose a methodology to manage the \textit{prior} information provided by different experts to compute premiums. These premiums will be calculated by minimizing an aggregation of the expected losses for each \textit{prior} information. The aggregation will be performed using Ordered Weighted Averaging (OWA) operators. Let $f_1, \ldots, f_n: \R^d \rightarrow \R$, $n$ real--valued mappings to be aggregated. For each $x \in \R^d$, the vector $\mathbf{f}(x)=(f_1(x), \ldots, f_n(x))$ is sorted in non--increasing order, obtaining $(f_{\sigma(1)}(x), \ldots, f_{\sigma(n)}(x))$, for a permutation $\sigma$ on the indices $\{1, \ldots, n\}$, such that $f_{\sigma(1)}(x)\geq \cdots \geq f_{\sigma(n)}(x)$. Abusing of notation, $f_{\sigma(i)}$ will be denoted by $f_{(i)}$ through this paper). Then, for the given vector of weights $\mathbf{\omega} = (\omega_1, \ldots, \omega_n) \in \R^n$, the OWA aggregation function is defined as:
$$
\OWA(\mathbf{f}; \mathbf{\omega}) := \dsum_{i=1}^n \omega_i \; f_{(i)}(x),
$$
where the $i$--th weight, $\omega_i$ is assigned to the value $f_{(i)}(x)$  which is in the $i$--th position when the vector $\mathbf{f}(x)$ is sorted in non--increasing order. Note that, in case $\omega_i=1$ for all $i$, the OWA operator coincides with the mean operator, while if
$\omega_1=1$ and $\omega_i=0$ for $i\neq 1$, it coincides with the maximum operator. Table \ref{table:owa} shows some of the statistical measures that can be derived by specifying different weights to the OWA functions to aggregate the measures $z_1=f_1(x), \ldots, z_n=f_n(x)$.

\begin{table}[h]
\centering
\begin{tabular}{|c|c|c|}\cline{2-3}
\multicolumn{1}{c|}{}  & $\mathbf{\omega}$ & $\OWA(\mathbf{z}_i; \mathbf{\omega})$\\
  \hline
 Sum                 & $(1,\ldots, 1)$   & $\dsum_{i=1}^{n} z_i$\\\hline
 Maximum                   & $(1,0,\ldots,0)$ & $\dmax_{1\leq i\leq n} \;z_i$\\\hline
  Minimum                   & $(0,0,\ldots,1)$ & $\dmin_{1\leq i\leq n} \;z_i$\\\hline
 Median                   & $(\overbrace{0,\ldots,0}^{\lceil\frac{n}{2}\rceil},1, 0,\ldots,0)$ & ${\rm median} \{z_1, \ldots, z_n\}$\\\hline
$K$--Centrum            & $(\overbrace{1,\ldots,1}^{k},0,\ldots,0)$ &  $\dsum_{i=1}^{k} z_{(i)}$\\\hline
anti $K$--Centrum            & $(0,\ldots,0, \overbrace{1,\ldots,1}^{k})$ & $\dsum_{i=n-k+1}^{n} z_{(i)}$\\\hline
 $(k_1,k_2)$--Trimmed mean& $(\overbrace{0,\ldots,0}^{k_1},1,\ldots,1,\overbrace{0,\ldots,0}^{k_2})$ & $\dsum_{i=k_1+1}^{n-k_2} z_{(i)}$\\\hline
 Range                     & $(1, 0, \cdots, 0 ,-1)$ & $\dmax_{1\leq i\leq n} z_{i}-\dmin_{1\leq i\leq n} z_{i}$\\\hline
\end{tabular}
\caption{Some of the choices for the OWA operators.\label{table:owa}}
\end{table}

\section{OWA Bonus--Malus Systems}
\label{sec:2}

In this section we provide a general framework to deal with the problem of computing premiums when different sources of \textit{prior} information are provided. Let $X$ be a random variable, representing the number of claims for a given portfolio, with density $f_\theta(x)$, depending on an unknown risk parameter $\theta$. We assume that $n$ experts have provided \textit{prior} knowledge about the risk. Hence, $n$ \textit{prior} densities $\pi_1, \ldots, \pi_n$ are given to explain the behavior of $\theta$.

If each single \textit{prior} distribution is taken into account, one may has different collective \eqref{eq:cp} and Bayes \eqref{eq:bp} risk premiums, one for each of the experts. However, different premiums cannot be loaded to a single policyholder. In what follows, we describe how to handle with such an information, by taking into account all the experts' experience.

Let us denote by $\mathbf{c} = (c_1, \ldots, c_n) \in \R_+^n$ each of the confidence level weights for each of the experts.  Let us also denote by $\mathbf{\omega} = (\omega_1, \ldots, \omega_n) \in \R^n$ the OWA weights. We assume, without loss of generality, that $\dsum_{i=1}^n c_i=1$ and that $\omega_i \leq 1$ for $i=1, \ldots, n$. For each $i \in \{1, \ldots, n\}$, we denote by $L_i^C$ (resp. $L^B_i$) the collective (resp. Bayes) loss function for the $i$th expert, i.e.,
\begin{align*}
L_i^C (P)  &=  P^2 - 2P E_{\pi_i}[E_\theta[X]] + E_{\pi_i}[E_\theta[X]^2],\\
L_i^B (P)  &=  P^2 - 2P E_{\pi_i(\theta|x)}[E_\theta[X]] + E_{\pi_i(\theta|x)}[E_\theta[X]^2].
\end{align*}

Observe that the loss functions model the behaviour of the losses according to the $i$th prior (collective) or posterior (Bayes) experts' information.

In these setting we introduce our flexible alternatives to collective and Bayes risk premiums as follows:

\begin{defn}[Collective and Bayes OWA Premiums]
$ $
\begin{itemize}
\item The OWA collective Premium is defined as:
\begin{equation}\label{eq:owacol}
\CP_{\mathbf{\omega}} \in  \arg\dmin_{P \in \mathbb{P}} \OWA( (c_1L^C_1(P), \ldots, c_nL^C_n(P)), \mathbf{\omega}). \tag{${\rm CP}_{\mathbf{\omega}}$}
\end{equation}
\item The OWA Bayes Premium is defined as:
\begin{equation}\label{eq:owabayes}
\BP_{\mathbf{\omega}}(\mathbf{x}) \in  \arg\dmin_{P \in \mathbb{P}}   \OWA( (c_1L^B_1(P), \ldots, c_nL^B_n(P)), \mathbf{\omega}). \tag{${\rm BP}_{\mathbf{\omega}}$}
\end{equation}
where $\mathbf{x}=(x_1, \ldots, x_t)$ is a given sample of claims for the policyholder.
\end{itemize}
\end{defn}

For the sake of simplicity, we use the following notation for the OWA functions involved in the optimization problems of \eqref{eq:owacol} and \eqref{eq:owabayes}:

\begin{align*}
\mathcal{L}^C_{OWA}(P) &= \OWA( (c_1L^C_1(P), \ldots, c_nL^C_n(P)), \mathbf{\omega}) = \dsum_{i=1}^n c_i \omega_{(i)} L^C_i(P),\\
\mathcal{L}^B_{OWA}(P) &= \OWA( (c_1L^B_1(P), \ldots, c_nL^B_n(P)), \mathbf{\omega}) = \dsum_{i=1}^n c_i \omega_{(i)} L^B_i(P),
\end{align*}
for any $P \in \R$.

Note that in the above collective and Bayes OWA premiums the $\omega$--weights are assigned to sorted losses while the $\mathbf{c}$--weights are assigned to experts. Then, each component of $\mathbf{c}$ indicates the information about the specific expert (as the confidence level) and might be fixed either by the decision maker or computed by using any of the techniques for linear opinion pools \cite{bacharach,rufo-perez,stone}. On the other hand, $\omega$ determines the OWA operator used to sort the losses. Observe also that the objective functions of the problems that need to be minimized  are, in general, not convex or differentiable functions, so the standard optimality conditions cannot be applied.

Through this paper, we analyze both collective and Bayes premiums from a similar perspective because both loss functions have a similar structure. Then, unless necessary, for each $i=1, \ldots, n$, we denote by $E_i[X]$ (resp. $E_i[X^2]$) the \textit{prior} or \textit{posterior} expectation (resp. second order moment) of $X$, which are part of the coefficients of the corresponding loss functions. Note that in the collective case, $E_i[X^p]= E_{\pi_i}[E_\theta[X^p]]$, while in the Bayes case, $E_i[X^p] = E_{\pi_i(\theta|x)}[E_\theta[X^p]]$, for $p=1, 2$.

\begin{remark}
\label{rem:1}
The sum case ($\omega_i=1$ for all $i=1, \ldots, n$) is a very special case in this framework (the reader may observe that since all the $\omega$ weights are equal, no matter the sorted sequence of the losses). The overall loss function is
$$
\mathcal{L}^\ell_{OWA}(P) = \dsum_{i=1}^n c_i L_i^\ell (P),
$$
for $\ell \in \{C, B\}$.

This function is convex and  continuously differentiable. Hence, applying optimality conditions, we get that its minimum is attained at:
$$
P^* = \dsum_{i=1}^n c_i E_i[X],
$$
that is, the $\mathbf{c}$--weighted mean of the standard collective or Bayes premiums for each of the experts.
\end{remark}

In order to compute the OWA collective and Bayes premiums, for general $\omega$ weights, we first provide a mathematical programming formulation that allows to solve, using commercial solvers, the desired premiums.
\begin{prop}
\label{prop:1}
The collective and Bayes OWA premiums can be computed by solving the following mathematical programming problem:
\begin{align}
& \min  \; \dsum_{j=1}^n \omega_j z_j\label{eq:OWAP}\tag{${\rm OWAP}$}\\
s.t. \quad &     y_i \geq c_i\,L^\ell_i (P), & \forall i=1, \ldots, n,\label{ctr:1}\\
& y_i \leq z_j+ M (1-w_{ij}), & \forall i,j =1, \ldots, n,\label{ctr:2}\\
& z_j \geq z_{j+1}, & \forall j=1, \ldots, n-1,\label{ctr:3}\\
& \dsum_{i=1}^n w_{ij} = 1, & \forall j=1, \ldots, n,\label{ctr:3b}\\
& y_i, z_j, P \geq 0, &\forall i, j=1, \ldots, n,\label{ctr:4}\\
& w_{ij} \in \{0,1\}, & \forall i, j =1, \ldots, n.\label{ctr:5}
\end{align}
with $\ell=C$ for the collective OWA premium and $\ell=B$ for the Bayes OWA premium and where $M>0$ is a large enough constant.
\end{prop}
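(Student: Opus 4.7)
The plan is to establish that \eqref{eq:OWAP} has the same optimal value and attains its minimum at the same $P$ as the OWA minimization defining $\CP_{\mathbf{\omega}}$ (resp.\ $\BP_{\mathbf{\omega}}$). The argument rests on an interpretation of the auxiliary variables: $y_i$ stands for the weighted expert--loss $c_i L_i^\ell(P)$; the binary $w_{ij}$ encodes whether expert $i$ is placed at rank $j$ in the non--increasing sort of the losses; and $z_j$ is the value sitting at rank $j$. Under this reading, \eqref{ctr:1} bounds $y_i$ from below by its intended value (equality will hold at optimum, since $z_j$ and hence the objective are pushed down by decreasing $y_i$), \eqref{ctr:2} is the standard big--$M$ linearization of the implication ``$w_{ij}=1\Rightarrow z_j\geq y_i$'', \eqref{ctr:3} imposes $z_1\geq\cdots\geq z_n$, and \eqref{ctr:3b} ensures every rank is filled.

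First I would verify the easy direction. Given any admissible $P^*$, set $y_i^* = c_i L_i^\ell(P^*)$, pick a permutation $\sigma$ with $y^*_{\sigma(1)}\geq\cdots\geq y^*_{\sigma(n)}$, and define $w^*_{ij}=1$ iff $\sigma(j)=i$ together with $z^*_j = y^*_{\sigma(j)}$. A routine check shows that \eqref{ctr:1}--\eqref{ctr:5} all hold and that the objective evaluates to $\sum_{j=1}^n\omega_j y^*_{(j)} = \mathcal{L}^\ell_{OWA}(P^*)$. Minimizing over $P^*$ shows that the optimal value of \eqref{eq:OWAP} is at most that of the OWA minimization.

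For the reverse inequality I would show that for every feasible $(P,y,z,w)$ of \eqref{eq:OWAP} one has $\sum_j\omega_j z_j \geq \mathcal{L}^\ell_{OWA}(P)$. From \eqref{ctr:2}--\eqref{ctr:3b} each $z_j$ is bounded below by at least one of the weighted losses $c_iL_i^\ell(P)$, and the monotone chain \eqref{ctr:3} propagates these bounds through the rank order, forcing $(z_j)_j$ to termwise dominate the non--increasing rearrangement of $(c_iL_i^\ell(P))_i$; taking the $\omega$--weighted sum then recovers the OWA value at $P$. Combined with the previous step this yields equality of both optimal values, and, applied to any minimizer of the MIP, equality of the corresponding optimal premiums.

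The main obstacle is precisely this termwise--domination step, since \eqref{ctr:3b} picks an expert for each rank but does not, on its face, forbid a single expert from being reused at several ranks, which would potentially exclude genuinely large losses from entering any $z_j$. The key observation to handle this is that at an MIP optimum the matrix $w$ must behave as a permutation: any feasible solution leaving some expert unassigned can be strictly improved by swapping that expert into a rank currently occupied by a reused one, exploiting the big--$M$ slack in \eqref{ctr:2} together with the sort constraint \eqref{ctr:3}. Formalizing this exchange argument is where the proof does its real work; the rest reduces to bookkeeping on the coefficients and the definition of the $\OWA$ operator.
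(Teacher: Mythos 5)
Your first direction is fine, and you are right that the crux is the reverse inequality and that the column constraints \eqref{ctr:3b} do not force $w$ to be a permutation matrix. But the exchange argument you propose to close this gap goes the wrong way. In a \emph{minimization} with $\omega_j\geq 0$, every active constraint $z_j\geq y_i$ (coming from $w_{ij}=1$ in \eqref{ctr:2}) is a lower bound that the optimizer wants to be as weak as possible; swapping an unassigned expert with a large loss into some rank only tightens the bounds on $z_j,z_{j-1},\ldots,z_1$ through \eqref{ctr:3}, and therefore cannot strictly decrease the objective. The optimum of the program as written does the opposite of what you claim: it sets $w_{i^*j}=1$ for all $j$, where $i^*\in\arg\min_i c_iL^\ell_i(P)$, so that $z_1=\cdots=z_n=\min_i c_iL^\ell_i(P)$ is feasible and the objective collapses to $\left(\sum_j\omega_j\right)\min_i c_iL^\ell_i(P)$. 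Concretely, for $n=2$, $\omega=(1,0)$ and a $P$ with $c_1L^C_1(P)>c_2L^C_2(P)$, taking $w_{21}=w_{22}=1$ and $z_1=z_2=c_2L^C_2(P)$ is feasible with objective $c_2L^C_2(P)<\max_i c_iL^C_i(P)$; the program then returns the MIN premium rather than the MAX premium. So the termwise domination $z_j\geq y_{(j)}$ that you need simply fails for feasible (indeed optimal) points of \eqref{eq:OWAP} as stated, and no optimality-based exchange can rescue it.

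What actually closes the argument is adding the row constraints $\sum_{j=1}^n w_{ij}=1$ for all $i$, which together with \eqref{ctr:3b} make $w$ a permutation matrix; then a pigeonhole argument combined with the chain \eqref{ctr:3} yields $z_j\geq y_{(j)}$ for every feasible point (for each $j$, at least one index $i$ with $y_i\geq y_{(j)}$ must be assigned to some rank $k\geq j$, whence $z_j\geq z_k\geq y_i\geq y_{(j)}$), and the reverse inequality follows for $\omega\geq 0$. Note that nonnegativity of $\omega$ is also needed at this step and is not assumed in the statement (the Range operator of Table \ref{table:owa} has a negative weight, for which the reverse inequality, and indeed boundedness, fail even after the correction). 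For what it is worth, the paper's own proof consists only of the variable-interpretation paragraph with which you open and silently skips this same point; your diagnosis of where the difficulty lies is sharper than the published argument, but your repair does not work, and the missing assignment constraint has to be added to the formulation itself rather than recovered at optimality.
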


\begin{proof}
Let $P$ be a premium. Denote by $y_i$ the marginal contribution of the $i$th--expert to the overall (aggregated loss), i.e., $y_i = c_i\,L^\ell_i (P)$ (constraint \eqref{ctr:1}). Let $z_j$ denote the $y$--value which is sorted in $j$th position when sorted in non increasing order and $w_{ij}$ the binary variable that takes value $1$ if $y_i=z_j$, and $0$ otherwise. Constraints \eqref{ctr:2}--\eqref{ctr:3b} (and the minimization criterion) assure the correct definition of these variables.
\end{proof}
Observe that constraints \eqref{ctr:1} are second order cone constraints, while \eqref{ctr:2} and \eqref{ctr:3} are linear constraints. Because some of the variables are required to be binary, the problem become a mixed integer second order cone optimization problem, that can be solved by many of the available optimization commercial solvers (as Gurobi\cite{gurobi}, IBM ILOG CPLEX\cite{cplex}, etc).

In some cases, the above formulation may be improved avoiding the binary variables, as in the case when $\omega_1\ge \ldots \ge \omega_n \ge 0$, which is refereed in the literature as the ``convex case'' (see \cite{nickel-puerto}).

\begin{prop}
\label{prop:2}
If $\omega_1 \geq \cdots \geq \omega_n \geq 0$, then, \eqref{eq:OWAP} is equivalent to the following continuous mathematical programming problem:

\begin{align}
\min \dsum_{j=1}^n v_j+ \dsum_{i=1}^n w_i \nonumber\\
\mbox{ s.t. } & y_j \geq c_j\,L^\ell_j (P), & \forall j=1, \ldots, n,\nonumber\\
& v_j + w_i \geq \omega_i y_j,& \forall i, j =1, \ldots, n,\nonumber\\
& y_i, z_i \geq 0, &forall i=1, \ldots, n\nonumber\\
& v_i, w_i \in \R, &\forall i=1, \ldots, n,\nonumber\\
& P \in  R\nonumber.
\end{align}
for $\ell \in \{C,B\}$.
\end{prop}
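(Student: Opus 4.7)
The plan is to recognize that \eqref{eq:OWAP} is essentially the problem
\[
\min_{P,y \ge 0} \; \OWA(y;\omega) \quad \text{s.t.} \quad y_j \ge c_j L^\ell_j(P), \ j=1,\dots,n,
\]
so the binary machinery in \eqref{eq:OWAP} is only there to sort the $y_j$. The idea then is to replace the combinatorial sorting by a linear program over the Birkhoff polytope and dualize it. First I would argue that the $\ge$ relaxation of $y_j = c_j L_j^\ell(P)$ is tight at an optimum: under the hypothesis $\omega_i \ge 0$ the OWA operator is non-decreasing in each coordinate of $y$, and $c_j L_j^\ell(P) \ge 0$, so any feasible solution can be brought to the boundary without increasing the objective.

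Next I would invoke the rearrangement inequality: for non-negative $y$ and weights $\omega_1\ge\cdots\ge\omega_n\ge 0$,
\[
\dsum_{j=1}^n \omega_j y_{(j)} = \dmax_{\sigma \in S_n} \dsum_{i=1}^n \omega_i y_{\sigma(i)}.
\]
Since the cost matrix $(\omega_i y_j)_{ij}$ is non-negative, this maximum over permutations coincides with the optimum of the linear relaxation over the Birkhoff polytope of doubly stochastic matrices (by Birkhoff's theorem, whose vertices are permutation matrices):
\[
\dsum_{j=1}^n \omega_j y_{(j)} = \max \Bigl\{ \dsum_{i,j} \omega_i y_j \, x_{ij} : \dsum_i x_{ij}=1, \ \dsum_j x_{ij}=1, \ x_{ij}\ge 0 \Bigr\}.
\]

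Then I would take the linear programming dual of this inner maximization, assigning multipliers $v_j$ to the column-sum constraints $\sum_i x_{ij} = 1$ and $w_i$ to the row-sum constraints $\sum_j x_{ij}=1$. The dual reads
\[
\min \Bigl\{ \dsum_j v_j + \dsum_i w_i : v_j + w_i \ge \omega_i y_j \ \forall i,j \Bigr\},
\]
with $v_j, w_i \in \R$ (free, as the primal constraints are equalities). Strong LP duality gives equality with the primal, so plugging this characterization of $\OWA(y;\omega)$ back into the outer minimization over $P$ and linking $y$ to $P$ via $y_j \ge c_j L_j^\ell(P)$ yields exactly the continuous formulation in the statement.

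The main obstacle I anticipate is the cleanest justification of the LP/rearrangement identification: one has to be careful that non-negativity of $\omega$ and of $y$ is really what makes the $\le 1$ vs $=1$ row/column constraints interchangeable at optimality and what legitimates passing from the permutation maximum to the LP maximum. Once this is settled, the replacement $y_j = c_j L_j^\ell(P) \leadsto y_j \ge c_j L_j^\ell(P)$ is immediate from the monotonicity argument, strong duality applies because the primal is a bounded feasible LP, and the resulting problem has convex quadratic (second order cone representable) constraints \eqref{ctr:1} plus linear constraints, with no binary variables remaining.
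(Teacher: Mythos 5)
Your argument is correct and follows essentially the same route as the paper's (very terse) proof: representing the ordering by permutation variables, observing that the resulting assignment polytope has integral vertices (total unimodularity, i.e.\ Birkhoff's theorem), and replacing the inner maximization by its linear programming dual with free multipliers $v_j, w_i$. You simply fill in the details that the paper delegates to the cited reference \cite{BPE14}, including the rearrangement-inequality identification of the OWA value with the permutation maximum and the monotonicity argument that makes the relaxed constraints $y_j \geq c_j L^\ell_j(P)$ tight.
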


\begin{proof}
The proof follows by the representation of the ordering between the residuals by permutation variables, which for $\lambda_1 \geq \cdots \geq \lambda_n\geq 0$, allows to write the objective function in \eqref{eq:OWAP} as an assignment problem which is totally unimodular, so it can be equivalently rewritten using its dual problem. The interested reader is refereed to \cite{BPE14} for further details on this transformation.
\end{proof}

In the case when the $\mathbf{\omega}$--weights are nonnegative, the OWA (collective and Bayes) premiums can be, constructively, computed using a finite search strategy. 

Let us now denote by $\mathcal{A}_0 = \{0=p_0 \leq p_1 < p_2 <\cdots < p_k < p_{k+1} = p_k + 1\}$ the solutions of the equations $c_iL^\ell_i (P)=c_jL^\ell_j (P)$ for all $i<j, \; i,j=1,\ldots,n$. Observe that $\mathcal{A}_0$ is the set of points where the change of ordering in the losses is possible. Let us denote by $I_j = (p_{j-1}, p_{j})$ the open interval between two consecutive points in $\mathcal{A}_0$.

Let us denote by $\mathcal{A}_\omega$ the set of critical points of $L^\ell_{OWA}$ inside all the open intervals, i.e.,
$$
\mathcal{A}_\omega = \bigcup_{i=1}^n \{P^* \in \arg\min L^\ell_{OWA}(P): P \in I_j\}
$$
and $\mathcal{A} = \mathcal{A}_0 \cup \mathcal{A}_\mathbf{\omega}$.

With the above definitions, Algorithm \ref{alg:1} shows a procedure to compute the premiums.

\begin{algorithm}[h]
\SetKwInOut{Output}{Output}
\KwData{Losses: $L_1^\ell, \ldots, L_n^\ell$; $\mathbf{\omega}$; $c$; $\mathcal{A}=\emptyset$}

{\it $\star$ Step 1.}\\

\For{$i, j \in \{1, \ldots, n\}$ $i<j$}{Compute the breaking points of each pair of expert's losses by solving $c_i\left(P^2 -2 E_i[X] P + E_i[X^2]\right) = c_j\left(P^2 -2 E_j[X] P + E_j[X^2]\right)$: $\mathcal{A}_0$.}

Sort $\mathcal{A}_0 = \{p_0=0 \leq p_1 < p_2 < \cdots < p_k < p_{k} + 1\}$.

{\it $\star$ Step 2.}\\

\For{$j \in \{1, \ldots, k\}$}{
Compute $x_j=\dfrac{p_{j-1}+p_{j}}{2}$.

Sort $L^\ell_i(x_j)$ in non increasing order through a permutation of the indices $\sigma$.

Set $\mathcal{L}^\ell_j (P) = \dsum_{i=1}^n \omega_i L^\ell_{\sigma(i)} (P)$.

$\mathcal{A}_\mathbf{\omega} = \mathcal{A}_\mathbf{\omega} \cup \{\arg\min \mathcal{L}^\ell_j(P)\}$.

}

{\it $\star$ Step 3.}\\

\Output{$\mathcal{L}^* = \min\{\mathcal{L}^\ell_1(P_1), \ldots, \mathcal{L}^\ell_k(P_k)\} = \mathcal{L}^\ell_{j^*}(P_{j^*})$.\\
$P^* = P_{j^*}$.
}
\caption{Procedure to compute OWA premiums for $\omega_1, \ldots, \omega_n\geq 0$.\label{alg:1}}
\end{algorithm}

\begin{theo}\label{theo:0}
If $\omega_1, \ldots, \omega_n \geq 0$, Algorithm \ref{alg:1}  computes the optimal OWA premiums in $O(n^2)$.
\end{theo}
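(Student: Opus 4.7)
The plan is to prove correctness and then the $O(n^2)$ running time.

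For correctness, I would first show that $\mathcal{L}^\ell_{OWA}$ is a continuous, piecewise-convex-quadratic function of $P$. By continuity of each $c_i L^\ell_i$, the sort of $(c_1 L^\ell_1(P),\ldots,c_n L^\ell_n(P))$ is constant on each open interval $I_j$ between consecutive points of $\mathcal{A}_0$, because the sorted order can only change where two of these values coincide. On such an interval a single permutation $\sigma_j$ governs the aggregation, so $\mathcal{L}^\ell_{OWA}(P) = \sum_{i=1}^n \omega_i\, L^\ell_{\sigma_j(i)}(P) =: \mathcal{L}^\ell_j(P)$. Since each $L^\ell_i$ is a monic convex quadratic and $\omega_i, c_i \geq 0$, the piece $\mathcal{L}^\ell_j$ is itself a convex quadratic.

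Next I would localize the global minimum. A continuous, piecewise convex-quadratic function attains its minimum either at an interior vertex of some piece or at a junction between two pieces. Formally, if $P^* \in \arg\min \mathcal{L}^\ell_{OWA}$ lies in some open $I_j$, convexity forces $P^*$ to coincide with the unique vertex of $\mathcal{L}^\ell_j$, so $P^* \in \mathcal{A}_\omega$; otherwise $P^* \in \mathcal{A}_0$. In either case $P^* \in \mathcal{A}$, and the algorithm, which evaluates the aggregated loss at each candidate via the appropriate piece, returns an optimizer.

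For the complexity, each pair $(i,j)$ with $i<j$ contributes a quadratic equation $c_i L^\ell_i(P)=c_j L^\ell_j(P)$ with at most two real roots, hence $|\mathcal{A}_0| \leq n(n-1)$ and there are $O(n^2)$ intervals. The reason the overall cost stays at $O(n^2)$ rather than $O(n^3)$ is that across each breaking point the sort changes by a single transposition of the two crossing indices, so the permutation $\sigma_j$, the coefficients of $\mathcal{L}^\ell_j$, and the location of its vertex can all be updated in $O(1)$. After an initial sort at some $x_1$ costing $O(n \log n)$, a plane sweep through the breaking points processes each of the $O(n^2)$ intervals in constant amortized time.

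The main obstacle I anticipate is handling degenerate configurations: (a) three or more curves $c_i L^\ell_i$ meeting at a common $P$, which would break the single-transposition update, and (b) ties over a whole interval when $c_i L^\ell_i \equiv c_j L^\ell_j$. Both are addressed by a symbolic tie-breaking rule that preserves the convexity of each piece and does not alter the candidate set $\mathcal{A}$, so the remaining bookkeeping is routine.
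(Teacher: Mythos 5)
Your correctness argument coincides with the paper's: both establish that the sorted order of $c_1L^\ell_1,\ldots,c_nL^\ell_n$ is constant on each open interval $I_j$ between consecutive points of $\mathcal{A}_0$, that each piece is a convex quadratic (a nonnegative combination of convex quadratics), and hence that a global minimizer must lie either at a breakpoint or at the unique vertex of some piece, i.e.\ in $\mathcal{A}=\mathcal{A}_0\cup\mathcal{A}_\omega$. Where you genuinely diverge is the complexity claim. The paper's appendix justifies $O(n^2)$ solely by counting $|\mathcal{A}|\leq n^2$ candidates, implicitly treating the work per candidate as $O(1)$; but Algorithm \ref{alg:1} as written sorts $n$ values and assembles an $n$-term quadratic in every one of the $O(n^2)$ intervals, which is $O(n^3\log n)$ work. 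You notice this and repair it with a plane-sweep in which crossing a breakpoint changes the permutation by a single transposition, so the piece's coefficients and vertex update in $O(1)$ amortized time. This is a sharper analysis than the paper offers, and your flagged degeneracies (three curves concurrent at a point, identically equal weighted losses) are exactly the cases that would break the transposition argument, so the symbolic perturbation is not optional. Two caveats: what you analyze is an optimized variant rather than Algorithm \ref{alg:1} verbatim, and sorting $\mathcal{A}_0$ itself already costs $O(n^2\log n)$, so even your version attains $O(n^2)$ only if one counts candidate evaluations rather than all elementary operations --- a generosity the paper's own statement also requires. In short: same correctness skeleton, but your complexity argument is a genuine (and needed) strengthening of the paper's one-line count.
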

\begin{proof}
The details of the proof are given in Appendix \ref{appendix}.
\end{proof}

The following example illustrates the usage of the proposed algorithm.

\begin{ex}
\label{ex:1}
Let us assume that three experts (with the same confidence level, i.e., $c=(\frac{1}{3},\frac{1}{3},\frac{1}{3})$) have manifested their opinion over a portfolio of policyholders. With such an information, we get that the collective loss functions for each of them are:
\begin{align*}
L_1^C(P) = P^2 - 2P+2,\\
L_2^C(P) = P^2 - 4P+6,\\
L_3^C(P) = P^2 - 6P+12.
\end{align*}

\begin{figure}[h]
\centering
\begin{tikzpicture}[scale=0.8]
  \begin{axis}[
xmax=4,xmin=0,
ymin=0,ymax=15,
xlabel=$P$,ylabel=$\mathcal{L}$,
xtick={0,...,4},
]
   \addplot[dotted, domain=0:4]{x^2 - 2*x + 2};
   \addplot[dashed, domain=0:4]{x^2 - 4*x + 6};
   \addplot[ domain=0:4] {x^2 - 6*x + 12};
   \end{axis}
\end{tikzpicture}
\caption{Loss functions in Example \ref{ex:1} (${L}_1^C$: dotted, ${L}_2^C$: dashed, ${L}_3^C$: black).\label{fig:1}}
\end{figure}
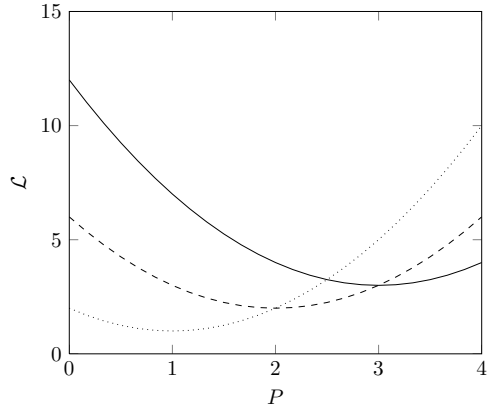

 The three functions are drawn in Figure \ref{fig:1}. Following the procedure described in Algorithm \ref{alg:1}, first we get that $\mathcal{A}_0= \{0, 2, 2.5, 3, 4\}$ (the intersection of each pair of the above three curves and the extreme points $0$ and $4$).

The second part of the algorithms needs to fix the $\omega$--weights. In Table \ref{t:1} we show the results obtained for different choices of $\omega$. In particular, we show the results obtained for the classical (weighted) SUM, MAXimum and MINimum criterions, the Anti K--center (aKC) criterion that minimizes the sum of the two smallest expected losses (obviating as outlier the largest one), and two cases of the Hurwicz criterion which minimizes the weighted sum of the maximum and the minimum losses (a weighted combination of the optimistic and the pessimistic principle). For each of the choices, the procedure consists of finding the minimum premium between each consecutive pair of elements in $\mathcal{A}_0$. For instance, for the MAX case ($\omega=(1,0,0)$), in the first interval $I_1=(0,2)$, the loss functions verify $L_3^C(P) \geq L_2^C(P) \geq L_1^C(P)$ (being the permutation of the indices $\sigma=[3,2,1]$). Hence:

\begin{align*}
\mathcal{L}_{OWA}(P) &= \omega_1 \times c_3 L_3^C(P) + \omega_2 \times c_2 L_2^C(P) + \omega_3 \times c_1 L_1^C(P)\\
&=   1 \times \dfrac{1}{3} \;L_3^C(P) + 0 \times \dfrac{1}{3}\; L_2^C(P) + 0 \times \dfrac{1}{3} L_1^C(P)\\
&= \dfrac{1}{3} P^2 - 2 P + 4,
\end{align*}
whose minimum over $[0,2]$ is $P^*=2$ (observe from Figure \ref{fig:1} that the quadratic function is monotone in the interval so reaching  its minimum in one of the extremes of the interval). The loss associated to $P^*$ is $\mathcal{L}_{OWA}(P^*)=1.3$. The rest of the analyses for the intervals $(2,2.5)$, $(2.5,3)$ and $(3,4)$, are similar, reaching the mininum premiums: $2.5$, $2.5$ and $3$, with losses: $1.1$, $1.1$ and $1.7$, respectively. Hence, the minimum loss among the obtained values is $1.1$ associated with the premium $P^*=2.5$, so the OWA premium for the MAX case is $2.5$.

Table \ref{t:1} shows the OWA-weights choice (first two columns), the interval $I_j=(p_j, p_{j+1})$ where the order of the functions changes, the permutation $\sigma$ that sorts in non increasing order the losses at $I_j$, the aggregated expression of the loss function at each of the intervals $I_j$ ($\mathcal{L}_{OWA}(P)$), the best premium at each of the intervals ($P^*$) and the approximate minimal value of the loss function at that premium ($\mathcal{L}_{OWA}(P^*)$). Numbers marked in bold face are the minimum values of the loss function, which allow us to select, accordingly, the premium. Observe that from Remark \ref{rem:1}, the algorithm is not needed to be applied to the SUM case, but it is shown for illustrative purposes. Figure \ref{fig:2} shows the OWA loss functions.
\end{ex}

\begin{table}[!]
\renewcommand{\arraystretch}{1.2}
\centering
\begin{tabular}{|c|c|c|c|c|c|c|}\hline
\multicolumn{2}{|c|}{$\mathbf{\omega}$} & $I_j=(p_j, p_{j+1})$ & $\sigma$ & $\mathcal{L}_{OWA}(P)$ & $P^*$ & $\mathcal{L}_{OWA}(P^*)$\\
\hline \parbox[t]{2mm}{\multirow{4}{*}{\rotatebox[origin=c]{90}{\textrm{\scriptsize SUM}}}} & \multirow{4}{*}{$(1, 1, 1)$}   & $(0, 2)$ &  $[3, 2, 1]$ &  $P^2-4P+\frac{20}{3}$ &$2$ & $\mathbf{2.7}$\\
& & $(2, 2.5)$ &  $[3, 1, 2]$ &  $P^2-4P+\frac{20}{3}$ &$2$ & $\mathbf{2.7}$\\
& & $(2.5, 3)$ &  $[1, 3, 2]$ &  $P^2-4P+\frac{20}{3}$ &$2.5$ & $2.9$\\
& & $(3, 4)$ &  $[1, 2, 3]$ &  $P^2-4P+\frac{20}{3}$ &$3.$ & $3.7$\\
\hline \parbox[t]{2mm}{\multirow{4}{*}{\rotatebox[origin=c]{90}{\textrm{\scriptsize MAX}}}} &\multirow{4}{*}{$(1, 0, 0)$}   & $(0, 2)$ &  $[3, 2, 1]$ &  $\frac{1}{3} P^2-2P+4$ &$2$ & $1.3$\\
& & $(2, 2.5)$ &  $[3, 1, 2]$ &  $\frac{1}{3} P^2-2P+4$ &$2.5$ & $\mathbf{1.1}$\\
& & $(2.5, 3)$ &  $[1, 3, 2]$ &  $\frac{1}{3} P^2-\frac{2}{3}P+\frac{2}{3}$ &$2.5$ & $\mathbf{1.1}$\\
& & $(3, 4)$ &  $[1, 2, 3]$ &  $\frac{1}{3} P^2-\frac{2}{3}P+\frac{2}{3}$ &$3.$ & $1.7$\\
\hline \parbox[t]{2mm}{\multirow{4}{*}{\rotatebox[origin=c]{90}{\textrm{\scriptsize MIN}}}} &\multirow{4}{*}{$(0, 0, 1)$}   & $(0, 2)$ &  $[3, 2, 1]$ &  $\frac{1}{3} P^2-\frac{2}{3}P+\frac{2}{3}$ &$1.$ & $\mathbf{0.33}$\\
& & $(2, 2.5)$ &  $[3, 1, 2]$ &  $\frac{1}{3} P^2-\frac{4}{3}P+2$ &$2$ & $0.67$\\
& & $(2.5, 3)$ &  $[1, 3, 2]$ &  $\frac{1}{3} P^2-\frac{4}{3}P+2$ &$2.5$ & $0.75$\\
& & $(3, 4)$ &  $[1, 2, 3]$ &  $\frac{1}{3} P^2-2P+4$ &$3.$ & $1.$\\
\hline \parbox[t]{2mm}{\multirow{4}{*}{\rotatebox[origin=c]{90}{\textrm{\scriptsize akC}}}} &\multirow{4}{*}{$(0, 1, 1)$}   & $(0, 2)$ &  $[3, 2, 1]$ &  $\frac{2}{3}P^2-2P+\frac{8}{3}$ &$1.5$ & $\mathbf{1.2}$\\
& & $(2, 2.5)$ &  $[3, 1, 2]$ &  $\frac{2}{3}P^2-2P+\frac{8}{3}$ &$2$ & $1.3$\\
& & $(2.5, 3)$ &  $[1, 3, 2]$ &  $\frac{2}{3}P^2-\frac{10}{3}P+6$ &$2.5$ & $1.8$\\
& & $(3, 4)$ &  $[1, 2, 3]$ &  $\frac{2}{3}P^2-\frac{10}{3}P+6$ &$3.$ & $2$\\
\hline \parbox[t]{2mm}{\multirow{4}{*}{\rotatebox[origin=c]{90}{\textrm{\scriptsize HURWICZ$_{0.5}$}}}} &\multirow{4}{*}{$(0.5, 0, 0.5)$}   & $(0, 2)$ &  $[3, 2, 1]$ &  $\frac{1}{3} P^2-\frac{4}{3}P+\frac{7}{3}$ &$2$ & $1.$\\
& & $(2, 2.5)$ &  $[3, 1, 2]$ &  $\frac{1}{3} P^2-\frac{5}{3}P+3$ &$2.5$ & $\mathbf{0.92}$\\
& & $(2.5, 3)$ &  $[1, 3, 2]$ &  $\frac{1}{3} P^2-P+\frac{4}{3}$ &$2.5$ & $\mathbf{0.92}$\\
& & $(3, 4)$ &  $[1, 2, 3]$ &  $\frac{1}{3} P^2-\frac{4}{3}P+\frac{7}{3}$ &$3.$ & $1.3$\\
\hline \parbox[t]{2mm}{\multirow{4}{*}{\rotatebox[origin=c]{90}{\textrm{\scriptsize HURWICZ$_{0.7}$}}}} &\multirow{4}{*}{$
(0.3, 0, 0.7)$}   & $(0, 2)$ &  $[3, 2, 1]$ &  $\frac{1}{3}P^2-1.06P+1.66$ &$1.6$ & $\mathbf{0.81}$\\
& & $(2, 2.5)$ &  $[3, 1, 2]$ &  $\frac{1}{3}P^2-1.53P+2.6$ &$2.3$ & $0.84$\\
& & $(2.5, 3)$ &  $[1, 3, 2]$ &  $\frac{1}{3}P^2-1.13P+1.6$ &$2.5$ & $0.85$\\
& & $(3, 4)$ &  $[1, 2, 3]$ &  $\frac{1}{3}P^2-1.6P+3$ &$3.$ & $1.2$\\
\hline \end{tabular}
\caption{Solutions for different choices of $\omega$--weights in Example \ref{ex:1}.\label{t:1}}
\end{table}

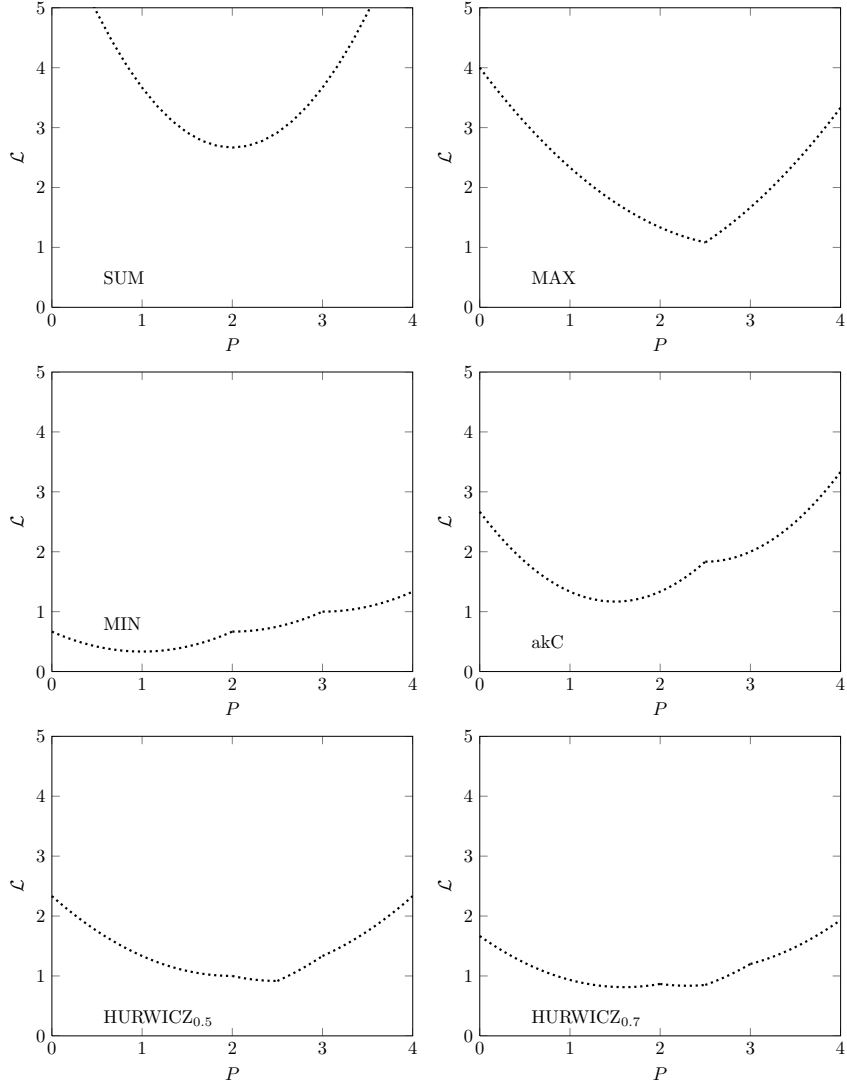
\begin{figure}
\centering
\begin{tikzpicture}[scale=0.7]
  \begin{axis}[
xmax=4,xmin=0,
ymin=0,ymax=5,
xlabel=$P$,ylabel=$\mathcal{L}$,
xtick={0,...,10},
]
   \addplot[ dotted, very thick, domain=0:4] {x^2 - 4*x + (20/3)};
\node[right] at (axis cs:0.5,0.5){SUM};
   \end{axis}
\end{tikzpicture}
\begin{tikzpicture}[scale=0.7]
  \begin{axis}[
xmax=4,xmin=0,
ymin=0,ymax=5,
xlabel=$P$,ylabel=$\mathcal{L}$,
xtick={0,...,10},
]
   \addplot[ dotted, very thick, domain=0:2.5] {(1/3)*x^2 - 2*x + 4};
\addplot[ dotted, very thick, domain=2.5:4] {(1/3)*x^2 - (2/3)*x + (2/3)};
\node[right] at (axis cs:0.5,0.5){MAX};
   \end{axis}
\end{tikzpicture}

\begin{tikzpicture}[scale=0.7]
  \begin{axis}[
xmax=4,xmin=0,
ymin=0,ymax=5,
xlabel=$P$,ylabel=$\mathcal{L}$,
xtick={0,...,10},
]
   \addplot[ dotted, very thick, domain=0:2] {(1/3)*x^2 - (2/3)*x + (2/3)};
\addplot[ dotted, very thick, domain=2:3] {(1/3)*x^2 - (4/3)*x + 2};
\addplot[ dotted, very thick, domain=3:4] {(1/3)*x^2 - 2*x + 4};
\node[right] at (axis cs:0.5,0.8){MIN};
   \end{axis}
\end{tikzpicture}
\begin{tikzpicture}[scale=0.7]
  \begin{axis}[
xmax=4,xmin=0,
ymin=0,ymax=5,
xlabel=$P$,ylabel=$\mathcal{L}$,
xtick={0,...,10},
]
   \addplot[ dotted, very thick, domain=0:2.5] {(2/3)*x^2 - 2*x + (8/3)};
\addplot[ dotted, very thick, domain=2.5:4] {(2/3)*x^2 - (10/3)*x + 6};
\node[right] at (axis cs:0.5,0.5){akC};
   \end{axis}
\end{tikzpicture}

\begin{tikzpicture}[scale=0.7]
  \begin{axis}[
xmax=4,xmin=0,
ymin=0,ymax=5,
xlabel=$P$,ylabel=$\mathcal{L}$,
xtick={0,...,10},
]
   \addplot[ dotted, very thick, domain=0:2] {(1/3)*x^2 - (4/3)*x + (7/3)};
   \addplot[ dotted, very thick, domain=2:2.5] {(1/3)*x^2 - (5/3)*x + 3};
   \addplot[ dotted, very thick, domain=2.5:3] {(1/3)*x^2 - x + (4/3)};
   \addplot[ dotted, very thick, domain=3:4] {(1/3)*x^2 - (4/3)*x + (7/3)};
\node[right] at (axis cs:0.5,0.3){HURWICZ$_{0.5}$};
   \end{axis}
\end{tikzpicture}
\begin{tikzpicture}[scale=0.7]
  \begin{axis}[
xmax=4,xmin=0,
ymin=0,ymax=5,
xlabel=$P$,ylabel=$\mathcal{L}$,
xtick={0,...,10},
]
   \addplot[ dotted, very thick, domain=0:2] {(1/3)*x^2 - 1.06666*x + 1.6666};
   \addplot[ dotted, very thick, domain=2:2.5] {(1/3)*x^2 - 1.53333*x + 2.6};
   \addplot[ dotted, very thick, domain=2.5:3] {(1/3)*x^2 - 1.13333*x + 1.6};
   \addplot[ dotted, very thick, domain=3:4] {(1/3)*x^2 - 1.6*x + 3};
\node[right] at (axis cs:0.5,0.3){HURWICZ$_{0.7}$};
   \end{axis}
\end{tikzpicture}
\caption{OWA loss functions in Example \ref{ex:1}.\label{fig:2}}
\end{figure}

H\"urlimann \cite{hurlimann94} proposed the following desirable properties which should be satisfied by a premium calculation principle:
\begin{itemize}
\item[\bf(P1)] Risk loading: $\mathcal{H}(X) \geq E[X]$.
\item[\bf(P2)] Maximal loss: $\mathcal{H}(X) \leq  {\it ess} \;\;{\it sup}\; X$ .
\item[\bf(P3)] Linear invariance: $\mathcal{H}(aX+b) = a\mathcal{H}(X)+b$, $a, b \geq 0$.
\item[\bf(P4)] Constant risk: $\mathcal{H}(C) = C, \forall C \ge 0$.
\item[\bf(P5)] Additivity: $\mathcal{H}(X+Y)= \mathcal{H}(X)+\mathcal{H}(Y)$,
\end{itemize}
\noindent where $\mathcal{H}$ is a premium principle, $X$ and $Y$ are risks and  {\it ess} {\it sup}\;; $X$  is the essential supremum of $X$. Note that the notation $E[X]$ in the first property (Risk loading) should be adapted adequately to the corresponding premium. For instance, in the collective  (resp. Bayes) premiums, $E[X]$ denotes $E_\pi[E_\theta[X]]$ (resp. $E_{\pi(\theta|x)}[E_\theta[X]]$).

\begin{prop}
The OWA premiums satisfy properties {\bf(P1)}, {\bf(P2)} , {\bf(P3)} and {\bf(P4)}  above.
\end{prop}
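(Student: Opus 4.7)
The plan is to verify (P1)--(P4) by exploiting the explicit quadratic form $L_i^\ell(P)=P^2-2P\,E_i[X]+E_i[X^2]$ of each single--expert loss (for $\ell\in\{C,B\}$) together with the coordinatewise monotonicity of $\OWA(\cdot;\mathbf{\omega})$ under the natural assumption $\omega_i\geq 0$ for all $i$. Property (P4) is the most immediate: if $X\equiv C$ is a constant risk, then $E_i[X]=C$ and $E_i[X^2]=C^2$ for every expert, hence $c_iL_i^\ell(P)=c_i(P-C)^2$; every entry of the aggregated vector carries the common factor $(P-C)^2\geq 0$ and sorting only permutes the $c_i$'s, so $\mathcal{L}^\ell_{OWA}(P)=(P-C)^2\sum_{i=1}^n\omega_i c_{\tau(i)}$ for a suitable permutation $\tau$, whose unique minimizer (provided the aggregated coefficient is strictly positive) is $P^*=C$. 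For (P3), a direct expansion of the substitution $X\mapsto aX+b$ shows that $\widetilde L_i^\ell(aP+b)=a^2L_i^\ell(P)$; since $a^2\geq 0$ leaves the ordering of the loss vector unchanged and factors through the OWA operator, one gets $\widetilde{\mathcal{L}}^\ell_{OWA}(aP+b)=a^2\mathcal{L}^\ell_{OWA}(P)$, so $aP^*+b$ minimizes the transformed loss iff $P^*$ minimizes the original; the degenerate case $a=0$ reduces to (P4).

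For (P1) and (P2) the central step is the containment
\begin{equation*}
\mathcal{H}(X)\in\bigl[\min_{1\leq i\leq n}E_i[X],\ \max_{1\leq i\leq n}E_i[X]\bigr],
\end{equation*}
valid whenever $\mathbf{\omega}\geq 0$. Indeed, for $P<\min_i E_i[X]$ each derivative $\tfrac{d}{dP}L_i^\ell(P)=2(P-E_i[X])$ is strictly negative, so a small increment of $P$ strictly decreases every component $c_iL_i^\ell(P)$; by the coordinatewise monotonicity of the OWA operator with nonnegative weights, $\mathcal{L}^\ell_{OWA}$ also strictly decreases there, so no minimizer can lie in $(-\infty,\min_iE_i[X])$. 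A symmetric argument rules out $(\max_iE_i[X],\infty)$. This inclusion immediately yields (P2), since $\max_iE_i[X]\leq {\it ess}\;{\it sup}\;X$, and (P1) follows once the aggregated expectation relevant to the OWA setting is identified with $\min_iE_i[X]$, the natural lower envelope of the per-expert risk premiums.

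The principal obstacle is the monotonicity step underlying (P1)--(P2): the function $\mathcal{L}^\ell_{OWA}$ is only piecewise quadratic and may fail to be differentiable at breakpoints in $\mathcal{A}_0$, so classical first--order conditions do not apply globally. One must instead invoke the coordinatewise monotonicity of the OWA operator itself, which holds because componentwise--increasing the input leaves the entries of its non--increasing rearrangement componentwise no smaller, and with $\mathbf{\omega}\geq 0$ the weighted sum is therefore nondecreasing. Verifying the strictness of this monotonicity on each side of the breakpoints then requires a brief additional check, but follows the same pattern.
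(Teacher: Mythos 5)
Your proof is correct (under the implicit standing assumption $\omega_i\geq 0$, which the paper's own argument also needs), but it reaches (P1)--(P3) by a genuinely different and in places tighter route. For (P1)--(P2) the paper compares optimal \emph{values}, writing $\min_P\sum_i c_{(i)}\omega_i L^C_{(i)}(P)\geq\sum_i c_{(i)}\omega_i\min_{P_i}L^C_{(i)}(P_i)$ and reading off the location of the minimizer from this, which is a non sequitur as stated; your argument instead locates the minimizer directly by showing that every component $c_iL_i^\ell$ is strictly decreasing on $(-\infty,\min_iE_i[X])$ and strictly increasing on $(\max_iE_i[X],\infty)$, and that the non--increasing rearrangement is coordinatewise monotone (since $u\leq v$ componentwise implies $u_{(k)}\leq v_{(k)}$ for every $k$), so with $\omega\geq 0$ the aggregated loss inherits this monotonicity and its minimizer is trapped in $[\min_iE_i[X],\max_iE_i[X]]$. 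This sandwiching is sound, handles the nondifferentiability at the breakpoints of $\mathcal{A}_0$ automatically, and is the more defensible derivation. For (P3) your exact functional identity $\widetilde L_i^\ell(aP+b)=a^2L_i^\ell(P)$, combined with the fact that a common nonnegative factor $a^2$ commutes with sorting, gives $\widetilde{\mathcal{L}}^\ell_{OWA}(aP+b)=a^2\mathcal{L}^\ell_{OWA}(P)$ globally; the paper instead tracks how the breakpoints $p_{ij}$ and the piecewise critical points $P^*=\sum_i\omega_{\sigma(i)}c_iE_i[X]/\sum_i\omega_{\sigma(i)}c_i$ transform, which requires checking that the ordering of the losses is preserved interval by interval. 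Your (P4) coincides with the paper's. The one point you should make explicit rather than leave as an aside is the meaning of $E[X]$ in (P1): your bound is $P^*\geq\min_iE_i[X]$, and this is in fact the \emph{only} reading under which (P1) holds for all nonnegative $\omega$ (e.g.\ the MIN criterion can produce a premium strictly below the $\mathbf{c}$--weighted average $\sum_ic_iE_i[X]$), so state that identification as part of the claim rather than as something to be "identified" afterwards; similarly, record that strictness of the monotonicity needs $c_i>0$ for all $i$ and $\omega\neq 0$.
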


\begin{proof}
We prove the result for the collective risk OWA premiums. The Bayes case follows in a similar way:
\begin{itemize}
\item[\bf(P1)] Note that $\dmin_P  \dsum_{i=1}^n c_{(i)} \omega_i L^C_{(i)}(P) \geq \dsum_{i=1}^n c_{(i)} \omega_i \dmin_{P_i} L^C_{(i)}(P_i)$. Hence,  the optimal OWA premium must verify  $P^* \geq E[X]$.
\item[\bf(P2)] Observe that  $\dsum_{i=1}^n c_{(i)} \omega_i L^C_{(i)}(P) \leq{\it ess} {\it sup}\;; $X$ X \dsum_{i=1}^n c_{(i)} \omega_i$, for any premium $P\geq 0$. Thus, $P^* \leq$  {\it ess} {\it sup}\; $X$.
\item[\bf(P3)] Note that if $p_{ij}$ is the intersection point between the curves $\mathcal{L}^C_i$ and $\mathcal{L}_j^C$ that conform the OWA function to be minimized, then, linear transformations of $X$ in the form $aX+b$ produces intersection points in the form $ap_{ij}+b$. Furthermore, once the breakpoints are known, between those breakpoints one must minimize functions in the form $\dsum_{i=1}^n \omega_{\sigma(i)} c_i (P^2 - 2P E_{\pi_i}[E_\theta[X]] + E_{\pi_i}[E_\theta[X]^2])$, for some adequate permutation of the indices. Hence, the possible critical points are in the form:
$$
P^* = \dfrac{\dsum_{i=1}^n \omega_{\sigma(i)} c_i E_{\pi_i}[E_\theta[X]] }{\dsum_{i=1}^n \omega_{\sigma(i)} c_i}.
$$
Performing the same operations for the forms $aX+b$, we get critical points $P^{'} = aP^* + b$. Since for $a, b\geq 0$ the order does not change, we get that $\mathcal{H}(aX+b)= a\mathcal{H}(X)+b$.
\item[\bf(P4)]  In this case, since $X$ is constant $X=C$, $\dsum_{i=1}^n c_{(i)} \omega_i L^C_{(i)}(P) = \dsum_{i=1}^n c_{(i)} \omega_i (P-C)^2 =  (P-C)^2 \dsum_{i=1}^n c_{(i)} \omega_i $, whose unique minimum is $P^*=C$.
\end{itemize}
\end{proof}

In some of the standard premium computation principles, some of the desirable properties are not verified. For instance, the variance principle does not verify {\bf(P2)}, and exponential, Esscher and variance principles do not verify {\bf(P3)}. In the following example we illustrate, that in our case, the additivity property {\bf(P5)}  is not satisfied, in general, by the OWA premiums.

\begin{ex}
\label{counterexample}
Let us consider two independent risks $X$ and $Y$ distributed as Poisson distributions $\Poi(\lambda)$ and $\Poi(\mu)$, respectively. Let also assume that two experts (with the same confidence level) have manifested their opinion about the risks by giving these Gamma \textit{prior} distributions:
\begin{align*}
\lambda \sim \Ga (2, 10), \Ga(2, 20),\\
\mu \sim \Ga(3, 10), \Ga(7, 20).
\end{align*}
Note that, since the \textit{prior} distribution becomes Gamma in this case, the collective loss functions for each of the experts and risks are:
\begin{align*}
X: & L^C_1 = P^2-0.40P+0.24,\\
 & L^C_2 = P^2-0.20P+0.11,\\
 Y: & L^C_1 = P^2-0.60P+0.39,\\
 & L^C_2 = P^2-0.70P+0.4725.
 \end{align*}

 Hence, the OWA collective premiums under the MAX criterion ($\mathbf{\omega}=(1,0))$ are reached at $P_X^* = 0.2$ and $P_Y^* = 0.35$. On the other hand, by additivity of the Poisson and Gamma distributions (when the second parameters of the Gamma distributions coincide), we get that the collective risk functions for the risk $X+Y$ are, for each of the experts:
 \begin{align*}
X+Y: & L^C_1 =P^2-P+0.75,\\
 & L^C_2 = P^2-0.90P+0.6525,
 \end{align*}
 and the OWA risk premium for the same weight is obtained at $P_{X+Y}^* = 0.5 \neq P_X^* + P_Y^* = 0.55$.

 In Figure \ref{fig:counterexample} we draw the loss functions for each of the risks $X$, $Y$ and $X+Y$ for the maximum criterion, where the reader can check the obtained results. 
  \end{ex}

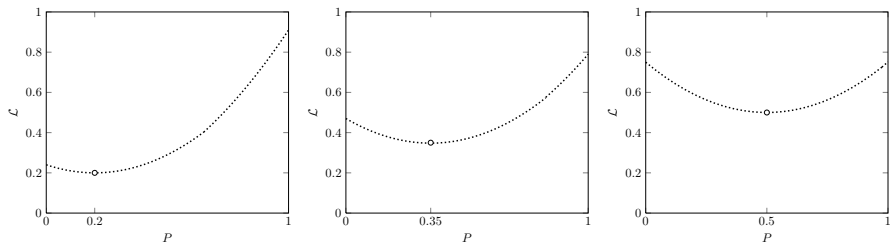
\begin{figure}[!]
\centering
\begin{tikzpicture}[scale=0.47]
  \begin{axis}[
xmax=1,xmin=0,
ymin=0,ymax=1,
xlabel=$P$,ylabel=$\mathcal{L}$,
xtick={0,0.2,1},
]
   \addplot[dotted, very thick, domain=0:0.65]{x^2-0.40*x+0.24};
   \addplot[dotted, very thick, domain=0.65:1]{x^2-0.20*x+0.11};

   \addplot[color=black, solid, mark=*, mark options={fill=white}]  coordinates {
   (0.2,0.2)
   };
   \node at (axis cs:4,11){X};
   \end{axis}
\end{tikzpicture}
\begin{tikzpicture}[scale=0.47]
  \begin{axis}[
xmax=1,xmin=0,
ymin=0,ymax=1,
xlabel=$P$,ylabel=$\mathcal{L}$,
xtick={0,0.35,1},
]
   \addplot[dotted, very thick, domain=0:0.825]{x^2-0.70*x+.47};
   \addplot[dotted, very thick, domain=0.825:1]{x^2-0.60*x+.39};


   \addplot[color=black, solid, mark=*, mark options={fill=white}]  coordinates {
   (0.35, 0.35)
   };
   \node at (axis cs:4,11){Y};
   \end{axis}
\end{tikzpicture}
\begin{tikzpicture}[scale=0.47]
  \begin{axis}[
xmax=1,xmin=0,
ymin=0,ymax=1,
xlabel=$P$,ylabel=$\mathcal{L}$,
xtick={0,0.5,1},
]

   \addplot[dotted, very thick, domain=0:0.975]{x^2-x+0.75};
   \addplot[dotted, very thick, domain=0.975:1]{x^2-0.9000*x+0.6525};

   \addplot[color=black, solid, mark=*, mark options={fill=white}]  coordinates {
   (0.5, 0.5)
   };
   \node at (axis cs:4,11){$X+Y$};
   \end{axis}
\end{tikzpicture}
\caption{Loss functions in Example \ref{counterexample} ($X$, $Y$ and $X+Y$ from the left to the right).\label{fig:counterexample}}
\end{figure}

Observe that, in general, non additivity of the computation principle is reasonable from our construction of the premiums. An expert may express pessimistic opinion over a risk while the same expert may manifest optimism for another risk. Hence, there is no reason to think that if the addition of the two risk is performed, the best premium is obtained under the same sorting for both risks.

\section{Application to real data set}
\label{sec:3}

In this section we report the results of applying our proposed methodology to a real data set in order to both show the behavior of the new premiums and also to illustrate the differences between the results obtained with our approach and those obtained with the standard approach.

We consider six different choices for the $\mathbf{\omega}$--weights (see Table \ref{weights}) and the following combinations of risk-prior distributions: Poisson-Gamma and Geometric-Beta (see Proposition \ref{prop:poigamma}). We compute the OWA Bonus--Malus premiums for the time horizon $t=0, \ldots, 4$ with number of claims ranging in $k=0, \ldots, 4$. The parameters of the experts' prior distributions¡, computed by the maximum likelihood method, are based on the data in \cite{morillo03} (Portfolio 1) and \cite{boucher06} (Portfolio 2), the two insurance portfolios shown in Table \ref{table:portfolios}. Those parameters are estimated as follows: experts $\#1$ and $\#2$ base their knowledge in Portfolios 1 and 2, respectively; expert $\#3$ is more conservative, being pessimistic about the driving of the policyholders; and expert $\#4$ is slightly optimistic and less conservative.  The \textit{prior} parameters for each of the experts and for each of the Gamma and Beta distributions are shown in Table \ref{table:parameters}.

\begin{table}[h]
\centering
\small
\begin{subtable}{.4\textwidth}
\begin{tabular}{|c|c|}\hline
 & $\mathbf{\omega}$\\\hline
SUM & $(1,1,1,1)$\\
MAX & $(1,0,0,0)$\\
MIN & $(0,0,0,1)$\\
akC & $(0,0,1,1)$\\
HURWICZ$_{0.5}$ & $(0.5,0,0,0.5)$\\
HURWICZ$_{0.7}$ & $(0.3,0,0,0.7)$\\\hline
\end{tabular}
\caption{OWA weights.\label{weights}}
\end{subtable}%
\begin{subtable}{0.4\textwidth}
\begin{tabular}{|c|c|c|}\hline
$k$ & Portfolio 1 & Portfolio 2\\\hline
0 & 122618 & 371481 \\
1 & 21686 & 26784 \\
2 & 4014 & 2118 \\
3 & 832 & 174 \\
4 & 224 & 18 \\
5 & 68 & 2 \\
6 & 17 & 2 \\
7 & 7 & 0 \\
$\ge$ 8 & 7 & 0 \\
\hline
\end{tabular}
\caption{Insurance portfolios.\label{table:portfolios}}
\end{subtable}
\caption{Input data for the experiments.\label{t:2}}
\end{table}


\begin{table}[h]
\centering
\begin{tabular}{|c|c|c|c|c||c|c|c|c|}\cline{2-9}
\multicolumn{1}{c|}{} & \multicolumn{4}{c}{Gamma$(\alpha_i, \beta_i)$} & \multicolumn{4}{|c|}{Beta$(\alpha_i, \beta_i)$}\\\hline
Parameters& \#1 & \#2 &  \#3 &  \#4 &  \#1 &  \#2 &  \#3 &  \#4\\\hline
$\alpha_i$ & 0.77 & 0.68 & 2.1 & 0.4 & 30.59 & 66.83 & 321.5 & 2.1 \\
$\beta_i$ & 3.40 & 9.85 & 15  & 3.1 & 6.66 & 4.56 & 9.3  & 3.2 \\\hline
\end{tabular}
\caption{Estimated experts' prior distribution parameters.\label{table:parameters}}
\end{table}

In Tables \ref{lemaire:PG} and \ref{PG} we show the Bonus--Malus premiums for the Poisson--Gamma combination. In Table \ref{lemaire:PG} we report the results by applying the classical Lemaire's approach for each of the four experts. The reader may  observe that expert $\#3$ charges higher premiums to the ``good'' drivers (those with less number of claims) and lower premiums to the ``bad'' ones (those with higher number of claims) than other experts.  This expert is, as expected, very conservative (recall that most of the policyholders are good drivers). On the other hand, expert $\#4$ loads lowest premiums to good drivers and highest premiums to the clients with high number of claims. In Table \ref{PG} we show the OWA Bonus--Malus premiums. In this case, the highest ``discounts'' to the good drivers are obtained with the MAX criterion (those clients not reporting any claim in its first contract year are loaded $22.70\%$ less than their initial premium). On the other hand, the MIN criterion overloads bad drivers (those clients with $4$ claims in the $4$th period are charged $363.85\%$ over their initial premium). The criterion for which the good drivers are more penalized is the akC criterion (the deduction from the first free--claim period is $7.23\%$). Observe that our approach smooths the pessimistic and optimistic information from the experts $\#3$ and $\#4$.

\begin{table}[!]
\centering
\begin{tabular}{|c|c|ccccc|}
\hline\parbox[t]{2mm}{\multirow{6}{*}{\rotatebox[origin=c]{90}{\textrm{\scriptsize EXPERT \#1}}}}& \backslashbox{$t$}{$k$} &0 	 & 	1 	 & 	2 	 & 	3 	 & 	4\\ \cline{2-7}
& 	 0 	 & 	 100 & 	 & 	 & 	 & 	\\
& 	 1 	 & 	 77.3001 	 &  	178.1350 	 &  	278.9700 	 &  	379.8049 	 &  	480.6399 	  \\
& 	 2 	 & 	 62.9993 	 &  	145.1794 	 &  	227.3595 	 &  	309.5396 	 &  	391.7198 	  \\
& 	 3 	 & 	 53.1638 	 &  	122.5139 	 &  	191.8640 	 &  	261.2141 	 &  	330.5642 	  \\
& 	 4 	 & 	 45.9846 	 &  	105.9698 	 &  	165.9550 	 &  	225.9401 	 &  	285.9253 	  \\
\hline\parbox[t]{2mm}{\multirow{6}{*}{\rotatebox[origin=c]{90}{\textrm{\scriptsize EXPERT \#2}}}}& \backslashbox{$t$}{$k$} &0 	 & 	1 	 & 	2 	 & 	3 	 & 	4\\ \cline{2-7}
& 	 0 	 & 	 100 & 	 & 	 & 	 & 	\\
& 	 1 	 & 	 90.7898 	 &  	223.8571 	 &  	356.9244 	 &  	489.9917 	 &  	623.0590 	  \\
& 	 2 	 & 	 83.1331 	 &  	204.9783 	 &  	326.8234 	 &  	448.6686 	 &  	570.5137 	  \\
& 	 3 	 & 	 76.6674 	 &  	189.0360 	 &  	301.4046 	 &  	413.7732 	 &  	526.1418 	  \\
& 	 4 	 & 	 71.1349 	 &  	175.3946 	 &  	279.6544 	 &  	383.9142 	 &  	488.1740 	  \\
\hline\parbox[t]{2mm}{\multirow{6}{*}{\rotatebox[origin=c]{90}{\textrm{\scriptsize EXPERT \#3}}}}& \backslashbox{$t$}{$k$} &0 	 & 	1 	 & 	2 	 & 	3 	 & 	4\\ \cline{2-7}
& 	 0 	 & 	 100 & 	 & 	 & 	 & 	\\
& 	 1 	 & 	 93.7500 	 &  	138.3929 	 &  	183.0357 	 &  	227.6786 	 &  	272.3214 	  \\
& 	 2 	 & 	 88.2353 	 &  	130.2521 	 &  	172.2689 	 &  	214.2857 	 &  	256.3025 	  \\
& 	 3 	 & 	 83.3333 	 &  	123.0159 	 &  	162.6984 	 &  	202.3810 	 &  	242.0635 	  \\
& 	 4 	 & 	 78.9474 	 &  	116.5414 	 &  	154.1353 	 &  	191.7293 	 &  	229.3233 	  \\
\hline\parbox[t]{2mm}{\multirow{6}{*}{\rotatebox[origin=c]{90}{\textrm{\scriptsize EXPERT \#4}}}}& \backslashbox{$t$}{$k$} &0 	 & 	1 	 & 	2 	 & 	3 	 & 	4\\ \cline{2-7}
& 	 0 	 & 	 100 & 	 & 	 & 	 & 	\\
& 	 1 	 & 	 75.6098 	 &  	264.6341 	 &  	453.6585 	 &  	642.6829 	 &  	831.7073 	  \\
& 	 2 	 & 	 60.7843 	 &  	212.7451 	 &  	364.7059 	 &  	516.6667 	 &  	668.6275 	  \\
& 	 3 	 & 	 50.8197 	 &  	177.8689 	 &  	304.9180 	 &  	431.9672 	 &  	559.0164 	  \\
& 	 4 	 & 	 43.6620 	 &  	152.8169 	 &  	261.9718 	 &  	371.1268 	 &  	480.2817 	  \\\hline
\end{tabular}
\caption{Standard single--experts' premiums by using Lemaire's approach (Poisson--Gamma).\label{lemaire:PG}}
\end{table}

\begin{table}[!]
\centering
\begin{tabular}{|c|c|ccccc|}
\hline\parbox[t]{2mm}{\multirow{6}{*}{\rotatebox[origin=c]{90}{\textrm{\scriptsize SUM}}}}& \backslashbox{$t$}{$k$} &0 	 & 	1 	 & 	2 	 & 	3 	 & 	4\\ \cline{2-7}
& 	 0 	 & 	 100 & 	 & 	 & 	 & 	\\
& 	 1 	 & 	 82.6582 	 &  	193.6878 	 &  	304.7174 	 &  	415.7470 	 &  	526.7766 	  \\
& 	 2 	 & 	 71.2369 	 &  	164.2918 	 &  	257.3467 	 &  	350.4016 	 &  	443.4565 	  \\
& 	 3 	 & 	 63.0118 	 &  	143.4898 	 &  	223.9678 	 &  	304.4458 	 &  	384.9238 	  \\
& 	 4 	 & 	 56.7340 	 &  	127.8561 	 &  	198.9782 	 &  	270.1003 	 &  	341.2223 	  \\
\hline\parbox[t]{2mm}{\multirow{6}{*}{\rotatebox[origin=c]{90}{\textrm{\scriptsize MAX}}}}& \backslashbox{$t$}{$k$} &0 	 & 	1 	 & 	2 	 & 	3 	 & 	4\\ \cline{2-7}
& 	 0 	 & 	 100 & 	 & 	 & 	 & 	\\
& 	 1 	 & 	 77.3001 	 &  	178.1350 	 &  	264.2003 	 &  	334.3071 	 &  	399.4045 	  \\
& 	 2 	 & 	 62.9993 	 &  	145.1794 	 &  	222.7198 	 &  	282.8075 	 &  	336.5723 	  \\
& 	 3 	 & 	 53.1638 	 &  	122.5139 	 &  	191.8640 	 &  	245.4111 	 &  	293.9029 	  \\
& 	 4 	 & 	 45.9846 	 &  	105.9698 	 &  	165.9550 	 &  	217.2575 	 &  	262.4762 	  \\
\hline\parbox[t]{2mm}{\multirow{6}{*}{\rotatebox[origin=c]{90}{\textrm{\scriptsize MIN}}}}& \backslashbox{$t$}{$k$} &0 	 & 	1 	 & 	2 	 & 	3 	 & 	4\\ \cline{2-7}
& 	 0 	 & 	 100 & 	 & 	 & 	 & 	\\
& 	 1 	 & 	 90.7898 	 &  	279.9272 	 &  	370.2263 	 &  	460.5253 	 &  	550.8244 	  \\
& 	 2 	 & 	 83.1331 	 &  	263.4609 	 &  	348.4482 	 &  	433.4356 	 &  	518.4230 	  \\
& 	 3 	 & 	 76.6674 	 &  	248.8242 	 &  	329.0900 	 &  	409.3559 	 &  	489.6217 	  \\
& 	 4 	 & 	 71.1349 	 &  	235.7281 	 &  	311.7695 	 &  	387.8108 	 &  	463.8522 	  \\
\hline\parbox[t]{2mm}{\multirow{6}{*}{\rotatebox[origin=c]{90}{\textrm{\scriptsize akC}}}}& \backslashbox{$t$}{$k$} &0 	 & 	1 	 & 	2 	 & 	3 	 & 	4\\ \cline{2-7}
& 	 0 	 & 	 100 & 	 & 	 & 	 & 	\\
& 	 1 	 & 	 92.7707 	 &  	166.6670 	 &  	240.5633 	 &  	314.4597 	 &  	388.3560 	  \\
& 	 2 	 & 	 86.5473 	 &  	154.9738 	 &  	223.4002 	 &  	291.8266 	 &  	360.2530 	  \\
& 	 3 	 & 	 81.1280 	 &  	144.8573 	 &  	208.5866 	 &  	272.3159 	 &  	336.0452 	  \\
& 	 4 	 & 	 76.3628 	 &  	136.0118 	 &  	195.6608 	 &  	255.3099 	 &  	314.9589 	  \\
\hline\parbox[t]{2mm}{\multirow{6}{*}{\rotatebox[origin=c]{90}{\textrm{\scriptsize HURWICZ$_{0.5}$}}}}& \backslashbox{$t$}{$k$} &0 	 & 	1 	 & 	2 	 & 	 3 	 & 	4\\ \cline{2-7}
& 	 0 	 & 	 100 & 	 & 	 & 	 & 	\\
& 	 1 	 & 	 83.6076 	 &  	162.8964 	 &  	242.1853 	 &  	321.4742 	 &  	412.0341 	  \\
& 	 2 	 & 	 72.6757 	 &  	139.4557 	 &  	206.2358 	 &  	273.0159 	 &  	349.6706 	  \\
& 	 3 	 & 	 47.3124 	 &  	122.7064 	 &  	180.6809 	 &  	238.6553 	 &  	303.5528 	  \\
& 	 4 	 & 	 41.8373 	 &  	110.0233 	 &  	161.4229 	 &  	212.8225 	 &  	264.2221 	  \\
\hline\parbox[t]{2mm}{\multirow{6}{*}{\rotatebox[origin=c]{90}{\textrm{\scriptsize HURWICZ$_{0.7}$}}}}& \backslashbox{$t$}{$k$} &0 	 & 	1 	 & 	2 	 & 	 3 	 & 	4\\ \cline{2-7}
& 	 0 	 & 	 100 & 	 & 	 & 	 & 	\\
& 	 1 	 & 	 87.0387 	 &  	154.6070 	 &  	222.1753 	 &  	289.7436 	 &  	376.8512 	  \\
& 	 2 	 & 	 50.0346 	 &  	136.3422 	 &  	194.7450 	 &  	253.1477 	 &  	311.5505 	  \\
& 	 3 	 & 	 44.1294 	 &  	122.8111 	 &  	174.5975 	 &  	226.3839 	 &  	278.1703 	  \\
& 	 4 	 & 	 39.5812 	 &  	112.2283 	 &  	158.9576 	 &  	205.6868 	 &  	252.4160 	  \\\hline
\end{tabular}
\caption{Poisson--Gamma Bonus Malus premiums by using OWA operators.\label{PG}}
\end{table}

In Tables \ref{lemaire:GB} and \ref{GB} we show the results for the Geometric--Beta combination of distributions. In the classical (single--expert) approach (Table \ref{lemaire:GB}), the discounts for good drivers are smaller than those obtained with the Poisson--Gamma distributions, except for expert $\#4$ (optimistic) who charges lower premiums for all the policyholders. Table \ref{GB} shows the results of our approach. Similar to the Poisson--Gamma case, good drivers get better deductions with the MAX criterion. The same occurs with bad drivers obtaining the lowest penalties in the MAX criterium. In this case, the MIN criterion charges lower discounts to good drivers (observe the tiny deduction of 0.31$\%$ for a driver not reporting any claim in its first year). The akC criterion loads the largest premiums to bad drivers (66.46$\%$ for a driver with 4 claims in its $4$th period). In general, premiums under the Geometric--Beta combination are lower, for bad drivers, than those obtained under the Poisson--Gamma combination.

A final observation from our computations is that one may expect that for any of the OWA operators constructed with the experience of a single expert (MIN or MAX), the Bonus--Malus premiums coincide with those computed for any of the single experts using the Lemaire's approach. However, this is not always true. Note that a Bonus--Malus premium is the ratio between the Bayes and the collective premium. Both (absolute) premiums comes from the experience of a single expert. In case both come from the same expert, the Bonus--Malus is the standard Bonus--Malus premium of such an expert. Otherwise, if the collective is constructed from an expert and the Bayes from a different one, the Bonus--Malus premium does not coincide with any of the standard Lemaire's premiums.

Finally, the reader may observe from Tables \ref{PG} and \ref{GB} that the Bonus--Malus premiums give us information about percentage discounts or bonification, but not about the final income by the insurance company. Hence, the final return depends on the initial $(k=0, t=0)$ premium charged to the client. To get a financial equilibrium for all the OWA choices one should compute such an amount that may differ for each of the criteria.

\begin{table}[!]
\centering
\begin{tabular}{|c|c|ccccc|}
\hline\parbox[t]{2mm}{\multirow{6}{*}{\rotatebox[origin=c]{90}{\textrm{\scriptsize EXPERT \#1}}}}& \backslashbox{$t$}{$k$} &0 	 & 	1 	 & 	2 	 & 	3 	 & 	4\\ \cline{2-7}
& 	 0 	 & 	 100 & 	 & 	 & 	 & 	\\
& 	 1 	 & 	 96.7310 	 &  	111.2515 	 &  	125.7719 	 &  	140.2924 	 &  	154.8129 	  \\
& 	 2 	 & 	 93.6689 	 &  	107.7297 	 &  	121.7906 	 &  	135.8514 	 &  	149.9122 	  \\
& 	 3 	 & 	 90.7947 	 &  	104.4241 	 &  	118.0535 	 &  	131.6829 	 &  	145.3123 	  \\
& 	 4 	 & 	 88.0917 	 &  	101.3153 	 &  	114.5390 	 &  	127.7626 	 &  	140.9862 	  \\
\hline\parbox[t]{2mm}{\multirow{6}{*}{\rotatebox[origin=c]{90}{\textrm{\scriptsize EXPERT \#2}}}}& \backslashbox{$t$}{$k$} &0 	 & 	1 	 & 	2 	 & 	3 	 & 	4\\ \cline{2-7}
& 	 0 	 & 	 100 & 	 & 	 & 	 & 	\\
& 	 1 	 & 	 98.5036 	 &  	120.1234 	 &  	141.7432 	 &  	163.3630 	 &  	184.9829 	  \\
& 	 2 	 & 	 97.0513 	 &  	118.3524 	 &  	139.6534 	 &  	160.9545 	 &  	182.2556 	  \\
& 	 3 	 & 	 95.6412 	 &  	116.6328 	 &  	137.6244 	 &  	158.6160 	 &  	179.6075 	  \\
& 	 4 	 & 	 94.2715 	 &  	114.9625 	 &  	135.6534 	 &  	156.3444 	 &  	177.0353 	  \\
\hline\parbox[t]{2mm}{\multirow{6}{*}{\rotatebox[origin=c]{90}{\textrm{\scriptsize EXPERT \#3}}}}& \backslashbox{$t$}{$k$} &0 	 & 	1 	 & 	2 	 & 	3 	 & 	4\\ \cline{2-7}
& 	 0 	 & 	 100 & 	 & 	 & 	 & 	\\
& 	 1 	 & 	 99.6890 	 &  	110.4082 	 &  	121.1274 	 &  	131.8467 	 &  	142.5659 	  \\
& 	 2 	 & 	 99.3798 	 &  	110.0658 	 &  	120.7519 	 &  	131.4379 	 &  	142.1239 	  \\
& 	 3 	 & 	 99.0726 	 &  	109.7256 	 &  	120.3786 	 &  	131.0316 	 &  	141.6845 	  \\
& 	 4 	 & 	 98.7673 	 &  	109.3875 	 &  	120.0076 	 &  	130.6278 	 &  	141.2479 	  \\
\hline\parbox[t]{2mm}{\multirow{6}{*}{\rotatebox[origin=c]{90}{\textrm{\scriptsize EXPERT \#4}}}}& \backslashbox{$t$}{$k$} &0 	 & 	1 	 & 	2 	 & 	3 	 & 	4\\ \cline{2-7}
& 	 0 	 & 	 100& 	 & 	 & 	 & 	\\
& 	 1 	 & 	 52.3810 	 &  	68.7500 	 &  	85.1190 	 &  	101.4881 	 &  	117.8571 	  \\
& 	 2 	 & 	 35.4839 	 &  	46.5726 	 &  	57.6613 	 &  	68.7500 	 &  	79.8387 	  \\
& 	 3 	 & 	 26.8293 	 &  	35.2134 	 &  	43.5976 	 &  	51.9817 	 &  	60.3659 	  \\
& 	 4 	 & 	 21.5686 	 &  	28.3088 	 &  	35.0490 	 &  	41.7892 	 &  	48.5294 	  \\
\hline
\end{tabular}
\caption{Standard single--experts' premiums by using Lemaire's approach (Geometric--Beta).\label{lemaire:GB}}
\end{table}

\begin{table}[!]
\centering
\begin{tabular}{|c|c|ccccc|}
\hline\parbox[t]{2mm}{\multirow{6}{*}{\rotatebox[origin=c]{90}{\textrm{\scriptsize SUM}}}}& \backslashbox{$t$}{$k$} &0 	 & 	1 	 & 	2 	 & 	3 	 & 	4\\ \cline{2-7}
& 	 0 	 & 	 100 & 	 & 	 & 	 & 	\\
& 	 1 	 & 	 56.8821 	 &  	73.1841 	 &  	89.4861 	 &  	105.7881 	 &  	122.0902 	  \\
& 	 2 	 & 	 41.4282 	 &  	52.9390 	 &  	64.4497 	 &  	75.9605 	 &  	87.4713 	  \\
& 	 3 	 & 	 33.4063 	 &  	42.4460 	 &  	51.4858 	 &  	60.5256 	 &  	69.5654 	  \\
& 	 4 	 & 	 28.4515 	 &  	35.9768 	 &  	43.5022 	 &  	51.0275 	 &  	58.5528 	  \\
\hline\parbox[t]{2mm}{\multirow{6}{*}{\rotatebox[origin=c]{90}{\textrm{\scriptsize MAX}}}}& \backslashbox{$t$}{$k$} &0 	 & 	1 	 & 	2 	 & 	3 	 & 	4\\ \cline{2-7}
& 	 0 	 & 	 100 & 	 & 	 & 	 & 	\\
& 	 1 	 & 	 52.3810 	 &  	68.7500 	 &  	85.1190 	 &  	101.4881 	 &  	117.8571 	  \\
& 	 2 	 & 	 35.3471 	 &  	43.5724 	 &  	51.8126 	 &  	60.0603 	 &  	68.3125 	  \\
& 	 3 	 & 	 24.1558 	 &  	29.7362 	 &  	35.3300 	 &  	40.9307 	 &  	46.5353 	  \\
& 	 4 	 & 	 18.4232 	 &  	22.6486 	 &  	26.8868 	 &  	31.1316 	 &  	35.3801 	  \\
\hline\parbox[t]{2mm}{\multirow{6}{*}{\rotatebox[origin=c]{90}{\textrm{\scriptsize MIN}}}}& \backslashbox{$t$}{$k$} &0 	 & 	1 	 & 	2 	 & 	3 	 & 	4\\ \cline{2-7}
& 	 0 	 & 	 100 & 	 & 	 & 	 & 	\\
& 	 1 	 & 	 99.6890 	 &  	110.4082 	 &  	121.1274 	 &  	131.8467 	 &  	142.5659 	  \\
& 	 2 	 & 	 99.3798 	 &  	110.0658 	 &  	120.7519 	 &  	131.4379 	 &  	142.1239 	  \\
& 	 3 	 & 	 99.0726 	 &  	109.7256 	 &  	120.3786 	 &  	131.0316 	 &  	141.6845 	  \\
& 	 4 	 & 	 98.7673 	 &  	109.3875 	 &  	120.0076 	 &  	130.6278 	 &  	141.2479 	  \\
\hline\parbox[t]{2mm}{\multirow{6}{*}{\rotatebox[origin=c]{90}{\textrm{\scriptsize akC}}}}& \backslashbox{$t$}{$k$} &0 	 & 	1 	 & 	2 	 & 	3 	 & 	4\\ \cline{2-7}
& 	 0 	 & 	 100 & 	 & 	 & 	 & 	\\
& 	 1 	 & 	 98.8537 	 &  	117.2536 	 &  	135.6534 	 &  	154.0533 	 &  	172.4531 	  \\
& 	 2 	 & 	 97.7391 	 &  	115.9046 	 &  	134.0700 	 &  	152.2355 	 &  	170.4009 	  \\
& 	 3 	 & 	 96.6548 	 &  	114.5925 	 &  	132.5301 	 &  	150.4677 	 &  	168.4053 	  \\
& 	 4 	 & 	 95.5996 	 &  	113.3157 	 &  	131.0317 	 &  	148.7478 	 &  	166.4639 	  \\
\hline\parbox[t]{2mm}{\multirow{6}{*}{\rotatebox[origin=c]{90}{\textrm{\scriptsize HURWICZ$_{0.5}$}}}}& \backslashbox{$t$}{$k$} &0 	 & 	1 	 & 	2 	 & 	 3 	 & 	4\\ \cline{2-7}
& 	 0 	 & 	 100 & 	 & 	 & 	 & 	\\
& 	 1 	 & 	 55.5666 	 &  	71.8029 	 &  	88.0392 	 &  	104.2754 	 &  	120.5117 	  \\
& 	 2 	 & 	 39.6633 	 &  	50.9655 	 &  	62.2677 	 &  	73.5699 	 &  	84.8721 	  \\
& 	 3 	 & 	 31.4239 	 &  	40.1849 	 &  	48.9458 	 &  	57.7067 	 &  	66.4676 	  \\
& 	 4 	 & 	 26.3470 	 &  	33.5529 	 &  	40.7588 	 &  	47.9647 	 &  	55.1706 	  \\
\hline\parbox[t]{2mm}{\multirow{6}{*}{\rotatebox[origin=c]{90}{\textrm{\scriptsize HURWICZ$_{0.7}$}}}}& \backslashbox{$t$}{$k$} &0 	 & 	1 	 & 	2 	 & 	 3 	 & 	4\\ \cline{2-7}
& 	 0 	 & 	 100 & 	 & 	 & 	 & 	\\
& 	 1 	 & 	 59.1645 	 &  	75.2508 	 &  	91.3371 	 &  	107.4234 	 &  	123.5097 	  \\
& 	 2 	 & 	 44.3836 	 &  	55.9269 	 &  	67.4702 	 &  	79.0135 	 &  	90.5568 	  \\
& 	 3 	 & 	 36.6131 	 &  	45.7996 	 &  	54.9860 	 &  	64.1724 	 &  	73.3589 	  \\
& 	 4 	 & 	 31.7437 	 &  	39.4756 	 &  	47.2074 	 &  	54.9393 	 &  	62.6712 	  \\
\hline
\end{tabular}
\caption{Geometric--Beta Bonus Malus premiums by using OWA operators.\label{GB}}
\end{table}

\section{Conclusions}
\label{sec:4}

We present a new family of flexible premium computation principles that allows to incorporate the beliefs of several experts in computing the collective and Bayes premiums. This framework allows to model different scenarios in calculating premiums: equitable, pessimistic, optimistic, trimmed and some mixtures of pessimistic--optimistic cases. We present mathematical programming formulations to compute those premiums as well as a finite search algorithm that allows to compute efficiently the premiums for any choice of the elements in the new family of premiums computation principles.

We prove some properties of the premiums computed under this new framework and we run some applications with real data sets in order to illustrate the methodology and to compare the results with those obtained with the classical approach.

There are many possible extensions of this work. The use of severities of the claims, and then, dealing with the compound model is the natural first step. Also, the use of measures that allow to choose among the many possible choices of $\omega$-weights would be a nice topic for further research. For the sake of that, one may first make the premiums comparable when computed with different approaches. Hence, budget-like constraints should be added to the mathematical programming problem that models the computation principle. Our finite search algorithm is not capable to handle such a variation, being necessary the development of a different solution approach. Furthermore, note that it  is not trivial to implement a similar approach to other types of premium computation principles (variance, exponential, Esscher, etc) since the loss functions involved are not convex or quadratic anymore. However, the use of (linear or convex quadratic) approximations of those functions may help to adapt the OWA methodology to other cases.

\section*{Acknowledgements}

The first author was partially supported by the projects FQM-5849 (Junta de Andaluc\'ia $\backslash$ FEDER), MTM2013-46962-C2-1-P (MINECO, Spain) and MTM2016-74983-C2-1-R. The second author was partially supported by the project ECO2013-4702 (MINECO, Spain). The authors were also supported by the research group SEJ-534 (Junta de Andaluc\'ia).

\bibliographystyle{elsarticle-harv}

\appendix

\section{Proof of Theorem \ref{theo:0}}\label{appendix}

In this appendix we give the details about the correctness and complexity of Theorem \ref{theo:0}. It is based on the following key points:
\begin{itemize}
\item\label{item1} The set of points where the loss functions of each of the experts change their sorting is finite. Hence, if we restrict optimizing  $\mathcal{L}^\ell_{OWA}$ between two of those ``breaking points'' the function is a nonnegative linear combination of convex functions, then so is the function for $\ell \in \{C,B\}$. Thus, both $\mathcal{L}^B$ and $\mathcal{L}^C$ are piecewise convex functions.
\item For each domain $I_j$, because of item \ref{item1} $\mathcal{L}^\ell_{OWA}(P)$, $\mathcal{L}^\ell_{OWA}(P)$ is convex in $I_j$. Hence, $\mathcal{L}^\ell_{OWA}(P)$ has at most one critical point in $I_j$, for all $j = 1, \ldots, k$, $\ell \in \{C, B\}$.
\item  In each of the intervals $I_j$, the losses for each of the experts are completely ordered. Hence, a single evaluation of a point inside the interval allows us to sort the losses of the experts and compute the ordered weighted sum $\mathcal{L}^\ell_{OWA}$.
\item Since $L^\ell_{OWA}(P)$ is strictly convex in $I_j$, those optimal solutions can be obtained by applying the standard optimality conditions over the closure of $I_j$, for $j=1, \ldots, n$.
\end{itemize}

The above comments allows to assure that $\mathcal{A}$ always contains the OWA premium. Since the number of elements in $\mathcal{A}$ is at most $n^2$, the overall  worst case complexity is $O(n^2)$.

\end{document}